\newtheorem{theorem}{Theorem}[section]
\newtheorem{lemma}[theorem]{Lemma}
\newtheorem{proposition}[theorem]{Proposition}
\newtheorem{definition}[theorem]{Definition}
\newcommand{\glt}[2]{G_{$1}\!\left($2\right)}
\newcommand{\Lie}{{\text{\rm Lie}}}
\def\gl{\mathfrak{gl}}
\def\sl{\mathfrak{sl}}
\def\newterm#1{{\em #1}}
\newcommand\B{{\mathsf{B}}}
\newcommand{\Extr}{{E}}
\newcommand{\PExtr}{{\mathcal E}}
\newcommand\FExtr{{\mathcal F}}
\newcommand\SExtr{{\mathcal S}}
\newcommand{\ProjSp}{{\mathbb P}}
\newcommand{\charac}{{\rm char}}
\newcommand{\PSL}{{\rm PSL}}
\newcommand{\ad}{{\rm ad}}
\newcommand{\gt}{ > }
\newcommand{\lt}{ < }
\newcommand{\isom}{\cong}
\def\comment#1{}
\begin{document}
\title[Inner ideals and buildings]{Inner ideals in Lie algebras and spherical buildings}

\author{Arjeh M.~Cohen}
\thanks{Dedicated to the memory of T.A.~Springer}

\date{\today}

\begin{abstract}
The correspondence found by Faulkner between inner ideals of the Lie algebra of a simple algebraic group and shadows on long root groups of the building associated with the algebraic group is shown to hold in greater generality (in particular, over perfect fields of characteristic distinct from two).

\end{abstract}

\maketitle

\section{Introduction}\label{S-intro}
Let $k$ be a field of characteristic $p$ (possibly $0$) distinct from $2$, and let $L$ be a Lie algebra over $k$. An element $x$ of $L$ is said to be \emph{extremal} if it is nonzero and  $[x,[x,L]]\subseteq k\cdot x$. Thus extremal elements are ad-nilpotent of index $2$ or $3$. If this index is $2$, that is, $\ad_x^2L = 0$, then $x$ is called a \emph{sandwich}, otherwise, it is called a \emph{pure extremal element} of $L$. More generally, a linear subspace $I$ of $L$  satisfying $[I,[I,L]]\subseteq I$ is called an \emph{inner ideal}. In particular, each $1$-dimensional inner ideal is spanned by an extremal element, and conversely. 

A point of the projective space $\ProjSp(L)$ on $L$ is called \emph{extremal} 
if it contains a pure extremal element of $L$.  We write $\Extr(L)$, or just
$\Extr$ if no confusion arises, for the set of all pure extremal elements
of $L$, and, similarly, $\PExtr$ or $\PExtr(L)$ for the set of all \emph{extremal
points}, which are $1$-dimensional subspaces of $L$ spanned by a member of $\Extr$. The notions introduced below are discussed in greater detail in \cite{CSUW,CohIvan,CohIvan2}.

Every pair $(x,y)$ of pure extremal elements of $L$ belongs to one of the five relations on $E$ described in the table below.
\[\begin{array}{|l|c|}
\hline
\text{\bf name}&\text{\bf condition}\\
\hline
\text{same point}&k\cdot x = k\cdot y\\
\text{strongly commuting}&k\cdot x +k\cdot  y\subseteq E\cup\{0\}\\
\text{polar}&[x , y]=0\ \text{and}\ k\cdot x +k\cdot  y\not\subseteq E\cup\{0\}\\
\text{special}&[x , y]\in E\\
\text{hyperbolic}&k\cdot x +k\cdot y+k\cdot [x,y]\cong \sl(k^2)\\
\hline
\end{array}
\]
Here, $\sl(V)$, for a vector space $V$, stands for the Lie algebra of linear maps $V\to V$ whose trace is zero. 
We will not only use the names of these five relations for pairs of  $\Extr$ but also for the corresponding pairs of points from $\PExtr$. If $(x,y)$ is strongly commuting, then the set of projective points of the span $\langle x,y\rangle = k\cdot x+k\cdot y$ lies entirely in $\PExtr$; such subsets of $\PExtr$ are called \emph{lines}; the collection of lines is denoted by $\FExtr$ or $\FExtr(L)$. A pair $(x,y)$ of pure extremal elements is said to be \emph{commuting} if and only if $x$ and $y$ represent the same point, are strongly commuting, or polar. If so, then $k\cdot x$ and $k\cdot y$ commute in $L$.

We find geometries from Lie algebras by taking $\PExtr$ as point set and introducing lines on the point set, which are members of $\FExtr$ or, if $\FExtr=\emptyset$, particular kinds of subsets of $\PExtr$ defined below. A pair of a set of points and a set of lines (which are viewed as sets of points) is called a \emph{point-line space}. A \emph{subspace} of such a space is a set $Q$ of points with the property that, whenever $x$ and $y$ are points of $Q$ lying on a common line, every point of the line also belongs to $Q$. Two points on a common line are called \emph{collinear}. Thus, $(\PExtr,\FExtr)$ is a point-line space in which pairs of points are collinear if and only if they are strongly commuting.

\begin{definition}\label{df-symp} \rm Suppose that $L$ has no sandwiches and is spanned by $E(L)$. If $(x,y)$ is a polar pair of pure extremal elements of $L$, then there is a unique subset $S(x,y)$ of $\PExtr$ defined as follows.
\begin{itemize}
\item If $\FExtr\ne\emptyset$, it is the \emph{convex closure of $k\cdot x$ and $k\cdot y$}, that is, the smallest subspace of the point-line space $(\PExtr,\FExtr)$ containing both $k\cdot x$ and $k\cdot y$ with the property that each point collinear with two non-collinear points of $S(x,y)$, also belongs to $S(x,y)$.
\item If $\FExtr=\emptyset$, it consists of all points commuting with all points that commute with both $k\cdot x$ and $k\cdot y$. 
\end{itemize}
The subset $S(x,y)$ is called the \emph{symplecton} generated by $x$ and $y$. It does not depend on the choice of a polar pair within $S(x,y)$.
Each pair of points of $S(x,y)$ is commuting. The collection of all symplecta is denoted by $\SExtr$ or $\SExtr(L)$.
\end{definition}

Buildings, in particular spherical buildings, were introduced by Tits \cite{Tits74} to study algebraic groups. See \cite{AbramenkoBrown} for an introduction. Here we will associate them with Lie algebras.
The following theorem shows how buildings arise from the point-line spaces we just defined. The (root) shadow space of a building (cf.~\cite{buek1}) is a kind of point-line space determined by the building. A non-degenerate polar space is a particular kind of shadow space of a building. These notions will be explained in Section \ref{S-geom}. The following theorem summarizes a sequence of results leading to the construction of a building associated with a simple Lie algebra generated by pure extremal elements.

\begin{theorem}[\cite{CSUW,CohIvan,CohIvan2,CuypersFleischmann,Cuypers2006,Cuyperspanhuis}]\label{RSSofL}
Suppose that $L$ is a finite-dimensional simple Lie algebra over $k$ which is generated by its pure extremal elements. Then $L$ has no sandwiches and is spanned by its pure extremal elements. Furthermore, there is a uniquely determined spherical building $\B(L)$ such that
\begin{itemize}
\item if $\FExtr\ne\emptyset$, then $\left(\PExtr,\FExtr\right)$ is the root shadow space of  $\B(L)$;
\item if $\FExtr=\emptyset$ and $\SExtr\ne\emptyset$, then $\left(\PExtr,\SExtr\right)$ is a non-degenerate polar space, and a shadow space of $\B(L)$. If, moreover, $\FExtr(L\otimes K)=\emptyset$ for each separable field extension $K$ of $k$ of degree $2$, then $\B(L)$ is associated with a symplectic group (i.e., an algebraic group of type $\mathsf{C}_n$).
\end{itemize}
\end{theorem}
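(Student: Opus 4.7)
The plan is to assemble the cited results into three logical tasks: (i) show $L$ has no sandwiches and is spanned by its pure extremal elements; (ii) identify the geometry on $\PExtr$ with a known type of shadow space; and (iii) invoke a recognition theorem to reconstruct a unique spherical building. The argument then splits according to whether $\FExtr$ is empty or not.

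For (i), I would argue that the subspace of $L$ spanned by the sandwich elements forms an ideal of $L$, a Kostrikin-Premet-type fact available in characteristic $\ne 2$. Simplicity of $L$, together with the observation that the generating pure extremals are not sandwiches, forces this ideal to vanish. Similarly, the linear span of $\Extr(L)$ is stable under $\ad_x$ for each pure extremal $x$, by the defining extremal identity together with the Premet relations; hence it is an ideal containing the generating set and must equal $L$.

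When $\FExtr\ne\emptyset$, the aim is to verify the axioms of a \emph{root filtration space} on $(\PExtr,\FExtr)$, with the five relations of the table playing the role of the filtration. The key structural identities are that for a special pair $(x,y)$ the bracket $[x,y]$ is again a pure extremal, with a prescribed relation to both $k\cdot x$ and $k\cdot y$, and that for a hyperbolic pair the induced copy of $\sl(k^2)$ dictates the local geometry. These identities are the content of \cite{CSUW,CohIvan,CohIvan2}. Once the axioms hold, the classification of root filtration spaces (see \cite{Cuyperspanhuis,CuypersFleischmann,Cuypers2006}) produces a unique spherical building $\B(L)$ with $(\PExtr,\FExtr)$ as its root shadow space.

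When $\FExtr=\emptyset$ but $\SExtr\ne\emptyset$, there are no lines and one must work directly with $(\PExtr,\SExtr)$. I would verify that every symplecton is a non-degenerate polar space and that the Buekenhout-Shult one-or-all axiom holds for the incidence of points with symplecta, so $(\PExtr,\SExtr)$ is itself a non-degenerate polar space and hence a shadow space of a uniquely determined spherical building. The final type assertion then follows because the hypothesis $\FExtr(L\otimes K)=\emptyset$ for every separable quadratic extension $K/k$ excludes the unitary and certain orthogonal geometries, whose long-root lines materialize only after an appropriate field extension, leaving only the symplectic type $\mathsf{C}_n$. The principal obstacle throughout is the axiomatic verification in (ii): the relevant root filtration and polar axioms involve triples and quadruples of extremal points, and showing that the Lie-algebraic relations realize precisely the combinatorics of a spherical building is the substantive case analysis performed across the cited papers. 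Uniqueness of $\B(L)$ then follows from standard Tits-theoretic rigidity in rank at least $2$.
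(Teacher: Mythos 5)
This theorem is not proved in the paper at all: it is stated as a summary of results imported from the cited literature (\cite{CSUW,CohIvan,CohIvan2,CuypersFleischmann,Cuypers2006,Cuyperspanhuis}), so there is no in-paper proof to compare against. Your outline is a faithful reconstruction of how those papers establish it: the non-existence of sandwiches and the spanning statement come from the associative bilinear form $g$ of \cite{CSUW} (its radical is an ideal, and sandwiches are exactly the extremal elements in that radical, which is the same mechanism as your ``span of sandwiches is an ideal'' step); the case $\FExtr\ne\emptyset$ is handled by verifying the root filtration space axioms and applying the classification of nondegenerate root filtration spaces; and the case $\FExtr=\emptyset$, $\SExtr\ne\emptyset$ is the Buekenhout--Shult polar-space recognition. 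One small attribution correction: the classification of root filtration spaces with lines is the content of \cite{CohIvan,CohIvan2}, whereas \cite{Cuypers2006,CuypersFleischmann,Cuyperspanhuis} are what handle the line-free polar case and the identification of the symplectic type, so your citation of the latter group for the $\FExtr\ne\emptyset$ recognition step should be swapped with the former. With that adjustment your decomposition matches the intended derivation.
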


The above theorem is applicable to algebraic groups over sufficiently large fields of characteristic distinct from $2$ in the following way. Let $G$ be a simple linear algebraic group defined over $k$ and let $K$ be a Galois field extension of $k$ with Galois group $\Gamma$ such that $G (k) = G(K)^\Gamma$, the group of fixed points of $G(K)$ under the action of $\Gamma$. We say that $K$ is a \emph{splitting field}  for $G$ over $k$ if $G(K)$ has a split maximal torus. By \cite[8.11, p.~117]{Borel}, there is a finite separable extension of $k$ that is a splitting field for $G$ over $k$. Let $\widetilde L = \Lie(G)$ be the Lie algebra of $G$ over $k$, and let $L_{G}$ be the quotient of $[\widetilde L,\widetilde L]$ by the unique maximal proper $G(k)$-invariant ideal of $\widetilde L$. Then $[\widetilde L,\widetilde L]\otimes K $ is spanned by pure extremal elements (the long root elements are extremal), and therefore, so is $L_G \otimes K$. Since $L_G\otimes K$  is simple, the above theorem applies to it. In general, the building $\B(L_G\otimes K)$ of the above theorem will be isomorphic to the building of $G(K)$. An exception occurs if $p=3$ for $G$ of type $\mathsf{A}_2$, since for $G(k) = \PSL(k^3)$, the building $\B\left(L_G\right)$ is of type $\mathsf{G}_2$, see the proof of Proposition \ref{lmIOrd3EltSpByE}. As a consequence, the subbuilding $\left(\B(L_G\otimes K)\right)^\Gamma$ of $\B(L_G\otimes K)$ will coincide with the building of $G(k)$ and does not depend on the choice of $K$ (see \cite{MWeiss} for details). Therefore, we will write $\B(L_G)$ for the building $\left(\B(L_G\otimes K)\right)^\Gamma$. This definition of $\B(L_G)$ agrees with the definition of Theorem \ref{RSSofL} if $L = L_G$ satisfies the hypotheses. In fact, since every Lie algebra $L$ satisfying the hypotheses of the above theorem is of the form $L_G$ for a suitable simple linear algebraic group $G$ over $k$ of automorphisms of $L$, all buildings $\B(L)$ of the theorem are obtained in this way. Conversely, for anisotropic algebraic groups $G$, the set $\Extr(L_G)$ is empty, so a splitting field for $G$ over $k$ will be needed to detect the point set $\PExtr$.

The above theorem implies that the building $\B(L)$ is uniquely determined up to isomorphism, but the question remains how to recover the objects of the building directly from $L$.  Earlier results of Benkart \cite{BenkartTAMS} and Faulkner \cite{Faulkner} in the case where $k$ is algebraically closed of characteristic $0$ indicate that the notion of inner ideal is well suited for this purpose. The explicit classification of inner ideals for simple Lie algebras related to algebraic groups under the same conditions on the field is given in \cite{Fernandez} and confirms these results. The sets $\PExtr$, $\FExtr$, and $\SExtr$ are shadows of objects of $\B(L)$ and we will see (in Section \ref{S-shadsAreInner}) that not only pure extremal points but also lines and symplecta span inner ideals of $L$. This leads us to the following extension of Faulkner's results. 

\begin{theorem}[Main Theorem]
\label{MainTheorem}
Let $\Lie(G)$ be the Lie algebra of a simple algebraic group $G$ defined over a field $k$ whose characteristic is distinct from $2$. Let $L_G$ be the unique nontrivial simple quotient of $[\Lie(G),\Lie(G)]$. If $\charac(k)=3$, assume that $G$ is not of absolute type $\mathsf{A}_2$.  Let $k_s$ be the separable closure of $k$ with Galois group $\Gamma$. Then the proper nontrivial inner ideals of $L_G$ are in bijective correspondence with the $\Gamma$-invariant shadows on $\PExtr(L_G\otimes k_s)$ of $\Gamma$-fixed flags of the building $\B(L_G\otimes k_s)$, where the inner ideal of $L_G$ is obtained from the shadow as the set of $\Gamma$-fixed points of the span in $L_G\otimes k_s$ of the shadow.
\end{theorem}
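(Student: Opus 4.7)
The plan is to reduce to the case of a separably closed base field by Galois descent and then, in the split case, to set up a direct bijection between proper nontrivial inner ideals of $\widetilde L := L_G\otimes k_s$ and flags of the building $\B(\widetilde L)$ supplied by Theorem~\ref{RSSofL}, sending a flag $F$ to the linear span in $\widetilde L$ of its shadow on $\PExtr(\widetilde L)$ and sending an inner ideal $I$ inversely to the flag whose shadow is $\PExtr(I):=\{k_s\cdot x\in\PExtr(\widetilde L)\mid x\in I\}$. Since $\Gamma$ acts compatibly on $\widetilde L$, on $\PExtr(\widetilde L)$, and on $\B(\widetilde L)$, this bijection restricts to $\Gamma$-invariants on both sides; and because $k_s/k$ is separable, $\Gamma$-invariant inner ideals of $\widetilde L$ correspond to inner ideals of $L_G$ by taking $\Gamma$-fixed points. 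Combining these yields the statement of the theorem.

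For the forward direction I would show in the forthcoming Section~\ref{S-shadsAreInner} that spans of shadows of flags are inner ideals. Extremal points span inner ideals by the very definition of extremality; lines and symplecta span inner ideals because commuting pairs of pure extremal elements satisfy $[x,y]=0$ and the extremality of each summand forces $[k_s\cdot x+k_s\cdot y,\,[k_s\cdot x+k_s\cdot y,\widetilde L]]\subseteq k_s\cdot x+k_s\cdot y$, with a similar but iterated bracket argument for symplecta; for a general flag, the span of its shadow is contained in the intersection of the spans of the shadows of its individual pieces, and intersections of inner ideals are inner ideals.

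For the backward direction I would argue from two claims about a given proper nontrivial inner ideal $I$: (i) $I$ is linearly spanned by $I\cap\Extr(\widetilde L)$, and (ii) $\PExtr(I)$ is a convex subspace of the shadow space $(\PExtr,\FExtr)$ (or of $(\PExtr,\SExtr)$ when $\FExtr=\emptyset$), so by standard characterizations of shadows of flags in root shadow spaces (cf.~\cite{buek1}) it is the shadow of a unique flag. Claim~(ii) would follow from the inner-ideal relation $[I,[I,\widetilde L]]\subseteq I$: for strongly commuting $x,y\in\PExtr(I)$ the whole line $k_s\cdot x + k_s\cdot y$ sits in $\PExtr(I)$ by linearity of $I$, and closure under the symplecton construction of Definition~\ref{df-symp} follows by iterating the double bracket over pairs that run through the symplecton. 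Combined with the forward direction, Claims (i) and (ii) make the two maps mutually inverse.

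The main obstacle will be Claim~(i). In characteristic $0$ it is due to Benkart~\cite{BenkartTAMS} and Faulkner~\cite{Faulkner}, and an explicit classification is given in~\cite{Fernandez}; extending it to arbitrary perfect fields of characteristic $\ne 2$---with exclusion of the exceptional $\mathsf{A}_2$ situation in characteristic~$3$, where $\B(L_G)$ is of type $\mathsf{G}_2$ and the long-root geometry shifts---should proceed by considering the algebraic subgroup of $\Aut(\widetilde L)$ generated by those long-root one-parameter subgroups that stabilize $I$: this group acts on $I$, supplies via a maximal torus a root-space decomposition whose long-root summands are extremal, and stability under sufficiently many root subgroups forces the non-long-root summands to vanish because $I$ is proper.
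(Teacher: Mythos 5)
Your overall architecture coincides with the paper's: Galois descent to the separably closed case, a forward step showing that spans of shadows are inner ideals, and a backward step showing that a proper inner ideal is spanned by its extremal elements and that its extremal point set is the shadow of a flag. However, both substantive steps are sketched in a way that either fails as stated or bypasses the real difficulty. In the forward direction, your assertion that extremality of the summands forces $[k_s\cdot x+k_s\cdot y,[k_s\cdot x+k_s\cdot y,\widetilde L]]\subseteq k_s\cdot x+k_s\cdot y$ is only correct for a \emph{strongly commuting} pair, where every element of the span is extremal or zero (this is Lemma~\ref{InnerxyColl}, which already needs the form $g$ and the identity $[x,[x,y]]=2g(x,y)x$). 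For a \emph{polar} pair the two-dimensional span is not an inner ideal at all: the inner ideal generated is the span of the whole symplecton $S(x,y)$, and proving that this span absorbs $[x,[y,\widetilde L]]$ requires the case analysis of Lemma~\ref{InnerxyPolar} (special versus hyperbolic position of $z$ relative to $x$ and $y$) together with a separate computation in the symplectic case (Lemma~\ref{ex:symplectic}); ``a similar but iterated bracket argument'' does not produce this. Also, showing that the span of the shadow of a flag is \emph{contained in} an intersection of inner ideals does not show it \emph{is} an inner ideal; the paper instead uses Lemma~\ref{lm-strong} to reduce to the two pairwise cases above.

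In the backward direction, your Claim~(i) is indeed the crux, but the root-subgroup argument you propose is not a proof: there is no a priori reason that enough long-root one-parameter subgroups stabilize a given inner ideal $I$, nor that the group they generate carries a torus whose root decomposition is adapted to $I$. The paper's route is entirely different: Benkart's Theorem~\ref{InnerIdealComm} reduces to inner ideals $I_a$ generated by a single ad-nilpotent element of index~$3$; for $p\neq 2,3$ a Jordan-algebra argument (Lemmas~\ref{lmFernandezEnd} and~\ref{lmFernandezFront}, following \cite{Fernandez}) handles these, and for $p=3$ one needs the explicit Jordan-block analysis of Proposition~\ref{lmIOrd3EltSpByE} for classical types and the Liebeck--Seitz nilpotent-orbit tables \cite{UniNil} for exceptional types (Proposition~\ref{PropOrd3Exceptional}). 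Your Claim~(ii) also has a gap: convex subspaces of a root shadow space are not in general shadows of flags, so convexity plus \cite{buek1} only settles the singular case. To recognize $\PExtr_I$ when it properly contains a symplecton, the paper must invoke the classification of strong parapolar spaces (Theorems~\ref{CCShult1545} and~\ref{thMeta}) and run a type-by-type argument, including explicit models for types $\mathsf{A}_n$--$\mathsf{D}_n$. Without these ingredients the proposed bijection is not established.
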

The result implies that, with the exception noted, each object of the building $\B(L_G)$ corresponds to a proper inner ideal of $L_G$ and that each proper nontrivial inner ideal of $L_G$ can be obtained as the intersection of inner ideals of $L_G$ corresponding to objects from a flag of $\B(L_G)$. Especially if $\PExtr(L_G)=\emptyset$, we need the identifications of $L_G$ with $(L_G\otimes k_s)^\Gamma$ and $\B(L_G)$ with the subbuilding of $\Gamma$-fixed objects in $\B(L_G\otimes k_s)$ to formulate and obtain the result. A version without the use of an extension field but with the assumption $\PExtr(L_G)\ne \emptyset$ is given in Theorem \ref{th-InnerIsShadow}.

Just like for linear algebraic groups, the splitting fields are needed to reveal the latent ad-nilpotent elements. At one extreme, if $L=L_G$ for an anisotropic simple algebraic group $G$, it has no proper nontrivial inner ideals at all. At the other extreme, for a split algebraic group $G$, the building of the group and of the corresponding Lie algebra $L_G$ has a type of maximal rank, and $\PExtr$ spans $L_G$. Since $k_s$ is a Galois extension of $k$ which is splitting for $G$ over $k$,  the types of the buildings  $\B(L_G)$ and  $\B(L_G\otimes k_s)$ need not be equal and are usually referred to as the \emph{relative type} and \emph{absolute type}, respectively, of $L_G$.

If $K$ is a splitting field of $G$ over $k$, then the Lie algebra $L_G\otimes K$  is spanned by pure extremal elements and the conditions of Theorem \ref{RSSofL} are satisfied for $L_G\otimes K$. This makes it possible to apply Theorem \ref{RSSofL} to $L_G\otimes K$ and use the point-line space $(\PExtr,\FExtr)$ if $\FExtr\ne\emptyset $ and $(\PExtr,\SExtr)$ if $\FExtr=\emptyset $ and $\SExtr\ne\emptyset $ for $L\otimes K$. If $\SExtr=\emptyset$ and $\FExtr=\emptyset$, then the absolute  type of $L_G$ is $\mathsf{A}_{1}$  and the geometry has little more to offer than the Moufang set $\PExtr$ associated with the projective special linear group $\PSL(k^2)$. But if, for instance,  $L_G$ has relative type $\mathsf{A}_{1}$ and absolute type $\mathsf{A}_{2d-1}$ for some $d\ge 3$, then $\PExtr(L_G\otimes K)\ne\emptyset$ and $\FExtr(L_G\otimes K)\ne\emptyset$, and although $L_G$ has no extremal elements, we can use the correspondence between inner ideals and shadows of flags for $L_G\otimes K$ to identify the objects of $\B(L_G)$ with a single class of inner ideals of $L_G$. 

The main theorem does not mention how incidence of flags of the building $\B(L_G)$ can be described in terms of inner ideals of $L_G$. In general, this can be accomplished by means of prescribed intersections of the inner ideals.

Fields of characteristic $2$ are excluded for several reasons. The most important one is that a suitable definition of inner ideal is lacking. Only if the dimension of the inner ideal is $1$ do we have a useful definition of extremal element; it involves the Premet identities (cf.~\cite{CSUW}). By the way, with this definition, Theorem \ref{RSSofL} is also valid if $p=2$. Another reason is that, besides the exception for $\mathsf{A}_2$ in characteristic $3$, we need to set aside the case of $\mathsf{D}_4$ for characterstic $2$, since the automorphism group of the Lie algebra has type $\mathsf{F}_4$, The current proof of the main theorem needs special care to incorporate the case $p=3$. A treatment of characteristic $2$ is expected to be much more elaborate.

In Section \ref{S-geom} we will recall the geometric interpretation and the known algebraic results that we need. In Section \ref{S-shadsAreInner} we prove that shadows span proper inner ideals and in Section \ref{S-InnerAreShads}  that  inner ideals are spanned by shadows. We also indicate how the proof in case the characteristic of $k$ is distinct from $2$ and $3$ can be shortened by use of Jordan algebra results.  We finish with the proof of the main theorem in Section \ref{S-conclusion}, using some known characterizations of root shadow spaces of exceptional Lie type.

In \cite{DillonJAlg,dillonProcAMS}, Dillon stated an extension of Faulkner's work to Kac-Moody Lie algebras and buildings of affine type. Since the proper shadow spaces of buildings of affine type are all spherical and  much of the Jordan algebra theory is valid for Lie algebra of infinite dimension (although the descending chain condition on inner ideals is often needed), the question rises whether the means of the current paper might work in establishing a correspondence to the spherical case for all characteristics of the field distinct from $2$.

I am grateful to Hans Cuypers,  Ralf K\"ohl, Antonio Fern\'andez L\'opez, Bernhard M\"uhlherr, and Richard Weiss for their interest. Special thanks go to Fern\'andez L\'opez for his comments leading to Lemmas \ref{lmFernandezEnd} and \ref{lmFernandezFront}.

\section{Lie algebras and geometry}\label{S-geom}
In 1974, Tits \cite{Tits74} published his geometric approach to algebraic groups, in which spherical buildings are the main structures for constructing and understanding linear algebraic groups.
More traditional views, such as projective spaces in case of the special linear groups, are obtained from buildings in the guise of  shadow spaces. We assume the reader to be familiar with buildings and refer those who are not to books like \cite{AbramenkoBrown,buek1,Shult,Weiss03}. Here we briefly discuss how the usual geometries of points and lines (such as the points and lines of a projective space in the above example) can be obtained from a building. Buildings are defined by means of Coxeter diagrams $M_n$. Here $n$ denotes the number of nodes of the diagram, usually referred to as the \emph{rank} of the building. In the case of simple algebraic groups, the corresponding buildings have types determined by their Dynkin diagrams; the information about root length is lost, so the difference between the types involving $\mathsf{B}$ and $\mathsf{C}$ disappears. If the type of a building is derived from a Dynkin diagram, the building is called \emph{spherical}. According to \cite{Tits74}, all spherical buildings whose types are connected Coxeter diagrams on at least three nodes, are known. Generally, they are obtained from simple algebraic groups, but there are some slight variations. A more recent result \cite{TitsWeiss} is that all spherical buildings  of rank $2$ with sufficient symmetry (the technical term being \emph{Moufang})  are also obtained from simple algebraic groups and some slight variations thereof.

There are different ways of defining a building of a given Coxeter type $M_n$. We will mostly work with the view of \cite{Tits74}, where a building is a multipartite graph, whose parts are indexed by the nodes of $M_n$. Thus, there are $n$ parts and adjacent vertices come from distinct parts. For a node $i$ of $M_n$, vertices from the part indexed by $i$ are called \emph{objects of type} $i$. Two objects are called \emph{incident} if they are equal or adjacent. \emph{Flags} are sets of objects each pair of which is incident. The \emph{type} of a flag is the set of nodes of $M_n$ occurring as types of an object in the flag. Two flags are called \emph{incident} if their union is also a flag of the building. For a set $J$ of nodes of $M_n$, the \emph{shadow} on $J$ of a flag $F$ of the building is the set of flags of type $J$ incident with $F$. If $j\in J$, then a  \emph{line of type} $j$ on $J$ is the shadow on $J$ of a flag whose type consists of all nodes of $M_n$ but $j$. Thus, if $J$ is a singleton, there is only one type of line. The shadow space on $J$ of a building of type $M_n$ is the pair $(P, {\mathcal L})$ consisting of
\begin{itemize}
\item the set $P$ of  \emph{points}: the flags of type $J$,
\item the set $\mathcal L$ of  \emph{lines}: the subsets of $P$ that are lines of type $j$ for some $j\in J$.
\end{itemize}
If a spherical building is associated with an algebraic group of Dynkin type $M_n$, where $M$ is one of $\mathsf{A}$, $\mathsf{B},\ldots, \mathsf{G}$,  then its shadow space of type $J$ is said to be of \emph{type} $M_{n,J}$, or, if $J = \{j\}$, of type $M_{n,j}$.  When labelling the Dynkin diagrams we follow the tables at the end of Bourbaki \cite{Bourb}.

In case of a projective space $\ProjSp(V)$, where $V$ is a vector space over a division ring $\mathbb{D}$ of dimension $n+1\ge2$, the associated simple algebraic group is $\PSL(V)$. It has type $\mathsf{A}_n$ and so does the building. For $j\in\{1,2,\ldots,n\}$, the objects of type $j$ of the building are the $j$-dimensional subspaces of $V$ and incidence is symmetrized inclusion. The shadow space of type $\mathsf{A}_{n,1}$ is the (points and lines of the) usual projective space $\ProjSp(V)$ and the shadow space of type $\mathsf{A}_{n,n}$ is the usual projective space of the dual vector space (defined over the opposite of $\mathbb{D}$). More generally, the points of the shadow space of type $\mathsf{A}_{n,j}$ for arbitrary $j$ are the linear subspaces of $V$ of dimension $j$; for this reason, it is often referred to as a \emph{Grassmannian} (of subspaces of $V$ of dimension $j$).
If $\mathbb{D} = k$ is a field, the shadow spaces of these buildings emerging from the Lie algebra of the algebraic group $\PSL(V)$ have type $\mathsf{A}_{n,\{1,n\}}$. Its point sets consist of the incident pairs of projective points and hyperplanes of the usual projective space. The lines of type $1$ are the sets of incident pairs $(p,H)$ where $H$ if a fixed hyperplane and $p$ varies over all points of a fixed line of $\ProjSp(V)$. Similarly,  the lines of type $n$ are the sets of incident pairs $(p,H)$ where $p$ if a fixed projective point and $H$ varies over all hyperplanes containing a fixed subspace of $V$ of codimension $2$.

More generally, we will be working with \emph{root shadow spaces}. Their types have the form $M_{n,J}$ where $J$ is the set of the nodes of the Dynkin diagram $M_n$ adjacent to the node that should be added to obtain the affine extension of the Dynkin diagram.  For all spherical buildings of connected type $ M_n$, with the exception of $ M= \mathsf{A}$, this set is a singleton. More concretely, the types of root shadow spaces of buildings with a connected type are $\mathsf{A}_{1,1}$,  $\mathsf{A}_{n,\{1,n\}}$ $(n\ge2)$, $\mathsf{B}_{n,2}$ $ (n\ge 2)$, $\mathsf{C}_{n,1}$ $(n\ge2)$, $ \mathsf{D}_{n,2}$ $(n\ge 4)$, $ \mathsf{E}_{6,2}$, $\mathsf{E}_{7,1}$, $\mathsf{E}_{8,8}$, $\mathsf{F}_{4,1}$, and $\mathsf{G}_{2,2}$.

In order to introduce properties of root shadow spaces, we recall from \cite{Shult, buek1} the following definitions regarding a point-line space. Such a space is called
\begin{itemize}
\item \emph{connected} if any two points $p$ and $q$ can be connected by a finite chain of points $p=p_0,p_1,\ldots, p_t=q$ such that $p_i$ and $p_{i+1}$ are collinear. Such a chain is called a \emph{path of length} $t$.
\item \emph{partial linear} if each pair of distinct points are on at most one line.
\item \emph{gamma} if, for each line $L$ and point $p$, the set of points of $L$ collinear with $p$ is either empty, a singleton, or all of $L$.
\item \emph{polar} if, for each line $L$ and point $p$, the set of points of $L$ collinear with $p$ is either a singleton or all of $L$.
\item \emph{thick} if each line has at least three points and each point lies on at least three lines.
\item \emph{non-degenerate} if no point is collinear with all points.
\item \emph{singular} if any two points are collinear.
\end{itemize}

The minimal length $t$ of a path between any two points $p$ and $q$ is called the \emph{distance} between $p$ and $q$. The \emph{diameter} of a connected point-line space is the maximal distance occurring between any two points $p$ and $q$.

As mentioned in the introduction, a \emph{subspace} $T$ of a point-line space is a subset of the space with the property that, whenever $p$ and $q$ are collinear members of $T$, each point of each line on $p$ and $q$ also belongs to $T$. A subspace is called \emph{convex} if, for any two points $p$ and $q$ of the subspace, all points of a shortest path from $p$ to $q$ also belong to the subspaces. The definition of a symplecton for $\FExtr\ne\emptyset$ given in the introduction agrees with this definition. The \emph{singular rank} of a point-line space is the maximal length of a chain of non-empty singular subspaces strictly containing a point. Thus, if the singular is $0$, there are no lines; if the singular rank is $1$, then lines are maximal singular subspaces. 

By results of Tits, Veldkamp and Buekenhout-Shult  (cf.~\cite{buek1}), thick non-degenerate polar spaces of singular rank at least $n-1\ge 2$ are known to be shadow spaces of type $\mathsf{B_{n,1}}$, $\mathsf{C_{n,1}}$, or $\mathsf{D_{n,1}}$. Such a polar space is said to be of \emph{rank} $n$. A thick non-degenerate polar space or rank $n$ is said to be of \emph{symplectic type} if it is the shadow space on $1$ of the building of a simple algebraic group of symplectic type (Dynkin type $\mathsf{C}_n$); in this case the point-line space is as described in Lemma \ref{ex:symplectic} below.

A \emph{parapolar space} is a thick connected partial linear gamma space having a collection of convex subspaces, called \emph{symplecta}, which are non-degenerate polar spaces of singular rank at least $2$ with the following two properties. 

\begin{itemize}
\item For each pair $(p,q)$ of points at mutual distance two, either there is a unique point, denoted $[p,q]$, collinear with both, or they are contained in a symplecton. 
\item Each line lies in a symplecton.
\end{itemize}
If two points at mutual distance two are always contained in a symplecton, then the parapolar space is called \emph{strong}.

If $L$ is finite-dimensional and simple (by which we mean to imply that it is not commutative) and contains a pure extremal element, then, with the exception of a single Lie algebra (of Witt type for $p=5$), it is spanned by pure extremal elements, and the conditions of Theorem \ref{RSSofL} are satisfied. If $\FExtr\ne\emptyset$, then according to this theorem the point-line space $(\PExtr,\FExtr)$ is the root shadow space of a spherical building; it is a parapolar space in which two points are collinear if and only if they are strongly commuting and in which every pair  $(k\cdot x, k\cdot y)$ of points at mutual distance two with more than one common collinear point lies in a symplecton, which coincides with the symplecton $S(x,y)$ of Definition \ref{df-symp}.  If  $\FExtr=\emptyset$ and  $\SExtr\ne\emptyset$, then $(\PExtr,\SExtr)$  is a non-degenerate polar space.

By \cite{CuypersRobertsShpectorov} for the simply laced case and \cite{Fleischmann} for all cases with $\FExtr\ne\emptyset$, the Lie algebra $L$ is uniquely determined by the corresponding building $\mathsf{B}(L)$ for all spherical buildings of rank at least 3.

The following observation is of importance for the proof of the main theorem; it shows that shadows of $\B(L)$ span commutative subspaces of $L$.

\begin{lemma}\label{lm-strong}For every spherical building of connected type, the shadow of every flag on the root shadow space is a convex subspace. Moreover, it is a strong parapolar space of diameter $2$.
\end{lemma}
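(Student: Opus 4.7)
The plan is to reduce the claim to a statement about residues in spherical buildings and then verify the geometric conclusion by inspection of the list of root shadow space types given just before the lemma.

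Fix a building $\B$ of connected spherical type $M_n$ and a flag $F$ of type $K \subseteq \{1,\dots,n\}$. A $J$-flag $G$ is incident with $F$ precisely when the objects of $G$ of type in $J \cap K$ coincide with those of $F$, while the remaining objects of $G$, of type $J \setminus K$, form a flag of the residue $\mathrm{Res}_\B(F)$. Hence the shadow $\mathrm{Sh}_J(F)$ is in natural bijection with the set of flags of type $J \setminus K$ in the spherical building $\mathrm{Res}_\B(F)$, whose Coxeter diagram is obtained from $M_n$ by deleting the nodes in $K$. Under this bijection the lines of the root shadow space contained in $\mathrm{Sh}_J(F)$ correspond exactly to the shadows on $J \setminus K$ of the cotype-$\{j\}$ flags of $\mathrm{Res}_\B(F)$, for $j \in J \setminus K$.

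The convexity assertion then follows from the standard fact (cf.~\cite{AbramenkoBrown}) that minimal galleries between two chambers both containing $F$ can be chosen within the star of $F$; projecting to $J$-shadows, any geodesic in the root shadow space between two points of $\mathrm{Sh}_J(F)$ remains in $\mathrm{Sh}_J(F)$, and this is exactly convexity as a subspace.

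For the remaining claim I would run through the list $\mathsf{A}_{n,\{1,n\}}$, $\mathsf{B}_{n,2}$, $\mathsf{C}_{n,1}$, $\mathsf{D}_{n,2}$, $\mathsf{E}_{6,2}$, $\mathsf{E}_{7,1}$, $\mathsf{E}_{8,8}$, $\mathsf{F}_{4,1}$, $\mathsf{G}_{2,2}$ and check what the shadow becomes after each possible deletion of $K$. The key combinatorial observation is that $J$ marks the nodes of $M_n$ adjacent to the affine-extending node; deleting $K$ splits $M_n$ into connected components, and $J \setminus K$ meets at most two of them. Each such component contributes a factor that is either a single point, a projective or Moufang line, a non-degenerate polar space, or a root shadow space of strictly smaller rank. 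The product of at most two such factors, one of singular type and one of polar type (or two singular), is a strong parapolar space of diameter at most $2$: two non-collinear points are either contained in the polar factor (hence lie in one of its symplecta) or they differ in both factors (hence lie in the grid-symplecton formed by combining them). In all cases a polar symplecton is available, which yields strong parapolarity.

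The main obstacle is the bookkeeping for the exceptional types. For example, in type $\mathsf{E}_{8,8}$ one must verify, for each of the eight choices of a single-node flag type $K$ (and then for flags of higher rank), that the residue together with the marking by $J \setminus K$ is again one of the listed root shadow spaces, or a product of two such spaces of diameter $1$. Analogous but less delicate checks are needed for $\mathsf{E}_6$, $\mathsf{E}_7$, $\mathsf{F}_4$, $\mathsf{G}_2$. These residue computations are standard building-theoretic exercises (see \cite{Tits74,buek1}) but carry essentially all the work of the lemma.
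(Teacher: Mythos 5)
Your overall strategy coincides with the paper's: identify the shadow of a flag $F$ of type $K$ with the building-theoretic residue of $F$ (whose diagram is $M_n$ with the nodes of $K$ deleted), quote the standard convexity of shadows (the paper cites Brouwer--Cohen--Neumaier, you sketch the gallery/projection argument --- both acceptable at this level), and then inspect the resulting geometries to see that any two non-collinear points of the shadow lie in a symplecton contained in it. The gap is in the inspection, which you defer but also pre-structure with a false claim. It is not true that each connected component of $M_n\setminus K$ meeting $J$ contributes ``a single point, a projective or Moufang line, a non-degenerate polar space, or a root shadow space of strictly smaller rank.'' Concretely: in $\mathsf{E}_{7,1}$ the shadow of an object of type $7$ is $\mathsf{E}_{6,1}$; in $\mathsf{E}_{6,2}$ the shadow of an object of type $1$ is the half-spin geometry $\mathsf{D}_{5,5}$; in $\mathsf{B}_{n,2}$ with $n\ge 5$ the shadow of an object of type $5$ is the Grassmannian $\mathsf{A}_{4,2}$ of lines of a projective $4$-space; and already in $\mathsf{A}_{n,\{1,n\}}$ the singular factors are projective spaces of arbitrary dimension, not lines. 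None of $\mathsf{E}_{6,1}$, $\mathsf{D}_{5,5}$, $\mathsf{A}_{m,2}$ $(m\ge 4)$ is a point, a line, a polar space, or a root shadow space in the paper's list.

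As a consequence, your argument for strong parapolarity and diameter $2$ --- two non-collinear points ``are either contained in the polar factor (hence lie in one of its symplecta) or they differ in both factors'' --- does not apply in exactly the cases that carry the content of the lemma, since there the non-singular factor is not a polar space. What is needed for those factors is that each is itself a strong parapolar space of diameter $2$ whose symplecta are symplecta of the ambient root shadow space; this is true (two non-collinear lines of a projective space span a $3$-space whose line Grassmannian is a rank-$3$ symplecton; two non-collinear points of $\mathsf{E}_{6,1}$ lie in a $\mathsf{D}_{5,1}$ symplecton; similarly for $\mathsf{D}_{5,5}$), but it is precisely the case-by-case verification you leave undone, and your taxonomy would steer it wrong. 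To complete the proof you must either carry out the residue-by-residue check with the correct list of arising geometries, or give a uniform reason why every non-collinear pair in a shadow is a polar pair whose symplecton stays inside the shadow, which is the one-line form the paper's own proof takes.
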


\begin{proof}Each shadow of the root shadow space of a spherical building of type $M_{n}$, together with the subsets which are also shadows of flags, can be viewed as a smaller building. The type of the smaller building is the Coxeter diagram obtained from $M_n$ by removing the nodes that are types of objects from the flag. The fact that the shadow is a convex subspace is known to hold for all shadows; see \cite{BCN} for details. Inspection of the shadows in the particular case of a root shadow space shows that any two non-collinear points of the shadow belong to a symplecton which is fully contained in the shadow. Therefore, the shadow has diameter $2 $ and is a strong parapolar space.
\end{proof}

\label{S-Benkart}
A key result to the transition from algebra to geometry is the following result of Benkart's.

\begin{theorem}\label{InnerIdealComm}
Let $k$ be a field of characteristic distinct from $2$ and let $L$ be a finite-dimensional simple Lie algebra over $k$ generated by its pure extremal elements. 
For each proper inner ideal $I$ of $L$, we have $[I,I]=0$. Moreover, each nonzero element of $I$ is ad-nilpotent of index $3$.
\end{theorem}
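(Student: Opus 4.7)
The two assertions are tightly linked: the ad-nilpotence claim follows cheaply from the commutativity $[I,I]=0$, so the real content of the theorem is the commutativity. For the reduction, note that if $x\in I$, the inner ideal condition gives $\ad_x^2 L\subseteq I$, and granting $[I,I]=0$ one more application of $\ad_x$ yields $\ad_x^3 L\subseteq [x,I]\subseteq [I,I]=0$, so $\ad_x^3=0$. By Theorem~\ref{RSSofL}, the hypotheses preclude sandwiches, so $\ad_x^2\ne 0$ whenever $x\ne 0$, and the index is exactly~$3$.

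A useful preparatory observation for the commutativity is a Jacobi consequence: for $a,b\in I$ and $c\in L$,
\[
[c,[a,b]] = [a,[c,b]] - [b,[c,a]] \in [I,[I,L]] \subseteq I,
\]
so $[L,[I,I]]\subseteq I$. This puts $[I,I]$ in a very rigid position: bracketing any element of $[I,I]$ against any element of $L$ lands back inside $I$, so no iterated bracket of elements of $[I,I]$ against $L$ escapes a small combinatorial neighbourhood of $I$.

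To prove $[I,I]=0$ itself I would exploit the assumption that $L$ is generated by pure extremal elements together with the five-relation classification of pairs of such elements. Assuming $[a,b]\ne 0$ for some $a,b\in I$, the plan is to express $a$ and $b$ as sums of extremal generators and use $[L,[I,I]]\subseteq I$ to show that an extremal generator $e\notin I$ would be forced into too many $I$-preserving relations with extremals in or near $I$, eventually manufacturing $e\in I$ and contradicting the properness of $I$. An alternative route is to pass to the separable closure $k_s$, decompose $L\otimes k_s$ into absolutely simple factors each generated by pure extremal elements, and invoke Benkart's classification-based argument \cite{BenkartTAMS} in each factor.

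\textbf{Main obstacle.} I expect the principal difficulty to be uniform control in positive characteristic, particularly $p=3$, where the Jordan-pair tools that streamline the characteristic-zero argument degenerate and one must fall back on direct manipulation of the Premet identities for extremal elements. In either approach, the most delicate case is ruling out that two elements of $I$ can stand in the hyperbolic relation, since this would embed an $\mathfrak{sl}_2$ inside an allegedly abelian subspace.
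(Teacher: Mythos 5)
There is a genuine gap: the commutativity $[I,I]=0$, which is the entire content of the theorem, is never actually established. Your reduction from $[I,I]=0$ to the ad-nilpotence claim is fine and matches the paper ($\ad_x^2L\subseteq[I,[I,L]]\subseteq I$, then $\ad_x^3L\subseteq[x,I]\subseteq[I,I]=0$, and the absence of sandwiches forces the index to be exactly $3$), and the Jacobi observation $[L,[I,I]]\subseteq I$ is correct. But your primary route to $[I,I]=0$ is only a sketch: you never say how an extremal generator $e\notin I$ is ``forced into too many $I$-preserving relations'' or how $e\in I$ is actually manufactured, and the case you yourself flag as hardest --- two elements of $I$ in hyperbolic relation, which would embed an $\sl_2$ in $I$ --- is precisely where a real argument is needed and is left unresolved. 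A plan that names its own main obstacle without overcoming it is not a proof.

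The paper closes this in one line: $[I,I]=0$ is a special case of Benkart's Lemma 1.13 in \cite{BenkartTAMS}, whose only nontrivial hypothesis --- that $L$ has no sandwiches --- is supplied by \cite[Corollary 4.5]{CSUW}. Your ``alternative route'' points at Benkart but does not commit to it, does not identify which hypothesis of her lemma must be checked, and wraps the citation in an unnecessary and slightly hazardous detour: passing to $k_s$ and decomposing $L\otimes k_s$ into ``absolutely simple factors'' is not needed (Benkart's lemma applies to $L$ over $k$ directly once sandwiches are excluded), and a simple Lie algebra need not remain simple, nor split into factors to which $I\otimes k_s$ restricts as a proper inner ideal, after extension of scalars. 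To repair the proposal, either carry out the direct extremal-element argument in full (including the hyperbolic case) or simply invoke Benkart's lemma over $k$ together with the no-sandwich result from \cite{CSUW}.
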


\begin{proof}
The first part, $[I,I]=0$, is a special case of \cite[Lemma 1.13]{BenkartTAMS} because, by \cite[Corollary 4.5]{CSUW}, there are no sandwiches in $L$. It follows from $[I,[I,[I,L]]]\subseteq [I,I]=0$ that all members of $I$ are ad-nilpotent of index at most $3$. Since these Lie algebras have no sandwiches, the index of any non-trivial element of $I$ must be $3$.
\end{proof}

As we will see in the next section, this result implies that the set of extremal points of an inner ideal with lines induced from the projective space $\ProjSp(L)$ is a point-line space with properties as in Lemma \ref{lm-strong}.

\section{Spans of shadows are inner ideals}\label{S-shadsAreInner}
Throughout this section, $k$ will be a field of characteristic $p$ (possibly $0$) distinct from $2$ and $L$ will be a Lie algebra over $k$. 

The intersection of inner ideals of $L$ is an inner ideal and the full Lie algebra $L$ itself is an inner ideal. Therefore, we have the notion of an inner ideal \newterm{generated} by a subset $Y$ of $L$. We will denote this inner ideal by $I_Y$ and by $I_y$ if $Y = \{y\}$. A spanning set of $I_Y$ can be found constructively by iteratively adding elements $[a,[b,x]]$ to $Y$ with $a,b\in Y$ and $x\in L$.

In this section we will show that, if $L$ is a finite-dimensional simple Lie algebra over $k$ generated by its pure extremal elements, then the span of the shadow of each flag of $\B(L)$ is an inner ideal of $L$. We first consider subspaces of $L$ spanned by a strongly commuting pair from $\PExtr$. The result follows directly from \cite[Proposition 13.65]{Fernandez} (the restriction $p\ne3$ is not needed for its validity). Here we give a direct proof under more stringent conditions.

\begin{lemma}\label{InnerxyColl}Let $L$ be a  finite-dimensional simple Lie algebra over $k$ generated by its pure extremal elements and suppose that
$(x,y)$ is a strongly commuting pair of elements from $\Extr$. Write $I=k\cdot x+k\cdot y$.
For each $z\in L$, the product $[x,[y,z]]$ is either zero or belongs to $I\cap\Extr$.
In particular, $I$ is an inner ideal of $L$.
\end{lemma}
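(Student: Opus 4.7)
The plan is to derive the formula $[x,[y,z]] = g(x,z)\,y + g(y,z)\,x$, where $g\colon L\times L\to k$ is the symmetric bilinear \emph{extremal form} of $L$ satisfying $[u,[u,z]]=2g(u,z)u$ for every $u\in\Extr$ and $z\in L$. The existence of $g$ in this bilinear form is a standard consequence of the Premet identities together with the absence of sandwiches (\cite[Corollary~4.5]{CSUW}), both available under our hypotheses. Once the displayed formula is in hand, the lemma follows by inspection.

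First I would note that $[x,y]=0$, as observed in the paragraph following the five-relation table: every strongly commuting pair is, in particular, commuting. The Jacobi identity then gives $[x,[y,z]]=[y,[x,z]]$. Next, strong commutativity guarantees that $x+y\in\Extr$, so the extremal identity applied to $u=x+y$ reads
\[
[x+y,[x+y,z]]=2g(x+y,z)(x+y).
\]
Expanding the left-hand side by bilinearity of the bracket, invoking $[x,[x,z]]=2g(x,z)x$ and $[y,[y,z]]=2g(y,z)y$ on the outer terms, using bilinearity of $g$ on the right, and cancelling the ``diagonal'' contributions, one obtains the polarised identity
\[
[x,[y,z]]+[y,[x,z]] = 2g(x,z)\,y+2g(y,z)\,x.
\]
Combining with $[x,[y,z]]=[y,[x,z]]$ (and using $\charac(k)\ne 2$) gives the target formula, which shows $[x,[y,z]]\in I$. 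Since every nonzero element of $I=k\cdot x+k\cdot y$ is in $\Extr$ by strong commutativity, $[x,[y,z]]$ is either $0$ or an element of $I\cap\Extr$, proving the first assertion.

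For the inner ideal claim I would expand a general bracket
\[
[\alpha x+\beta y,[\gamma x+\delta y,z]]
= \alpha\gamma[x,[x,z]]+\alpha\delta[x,[y,z]]+\beta\gamma[y,[x,z]]+\beta\delta[y,[y,z]]
\]
and observe that the outer two summands lie in $I$ by the extremal identity for $x$ and $y$, while the inner two do so by the formula just derived; hence $[I,[I,L]]\subseteq I$. The only step that requires real care is the construction and bilinearity of $g$, which is precisely where one needs the absence of sandwiches; everything else is a one-line polarisation that uses nothing beyond $\charac(k)\ne 2$.
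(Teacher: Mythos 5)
Your proposal is correct and follows essentially the same route as the paper: the same polarisation of the extremal identity at $x+y\in\Extr$ (using $[x,y]=0$ and $\charac(k)\ne2$), yielding $[x,[y,z]]=g(x,z)y+g(y,z)x\in I$, with the membership in $I\cap\Extr\cup\{0\}$ read off from $I\subseteq\Extr\cup\{0\}$. The only cosmetic difference is that you simplify the resulting expression to the symmetric closed form, whereas the paper leaves it as $\tfrac12[x+y,[x+y,z]]-g(x,z)x-g(y,z)y$.
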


\begin{proof}
By \cite{CSUW} $L$ is spanned by $\Extr$ and there is a non-degenerate associative symmetric bilinear form $g$ on $L$ such that, for each $x\in \Extr$ and $y\in L$, 
\[ [x,[x,y]] = 2g(x,y) x.\]
Let $z\in L$.
The assumption that $x$ and $y$ strongly commute implies that each member of $I$ is either $0$ or lies in $E$. In particular, $x+y\in E$, so $[x+y,[x+y,z]]=2g(x+y,z)(x+y)\in I$. Furthermore, using $[x,y]=0$, we find
\[\begin{array}{rcl}[x+y,[x+y,z]]&=&[x,[y,z]]+[y,[x,z]]+2g(x,z)x+2g(y,z)y\\&=&2[x,[y,z]]+2g(x,z)x+2g(y,z)y .\end{array}\]
Thus $[x,[y,z]]=\frac{1}{2}[x+y,[x+y,z]]-g(x,z)x-g(y,z)y\in  I$. Since $[x,[x,L]]=k\cdot x$ and $[y,[y,L]]=k\cdot y$ are contained in $I$, this shows that $I$ is an inner ideal.  Moreover, the above observation gives $I\subseteq \{0\}\cup E$, so $[x,[y,z]]\in  \{0\}\cup (I\cap E)$.
\end{proof}

The next lemma concerns polar pairs in case $\FExtr\ne\emptyset$. 

\begin{lemma}\label{InnerxyPolar}
Let $L$ be a finite-dimensional simple Lie algebra over $k$ generated by $E$ and suppose $\FExtr\ne\emptyset$. If $(x,y)$ is a polar pair from $E$, then $I=\langle S(x,y)\rangle$ is an inner ideal. It is equal to $I_{\{x,y\}}$, the inner ideal generated by $x$ and $y$.
\end{lemma}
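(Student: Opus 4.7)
The plan is to prove both assertions together. Since $x, y \in I$, we will get $I_{\{x,y\}} \subseteq I$ once $I$ is shown to be an inner ideal; the reverse inclusion $I \subseteq I_{\{x,y\}}$ will come from analyzing the extremal points of $I_{\{x,y\}}$ via Theorem \ref{InnerIdealComm}.

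To verify that $I$ is an inner ideal, I would show $[u, [v, z]] \in I$ for all pure extremal $u, v$ with $[u], [v] \in S(x,y)$ and all $z \in L$. By Definition \ref{df-symp} every pair of points of $S(x,y)$ is commuting, so $(u,v)$ falls into one of three cases. The same-point case is immediate from the extremal identity $[u,[u,z]] = 2 g(u,z) u \in k\cdot u$, and the strongly commuting case is exactly Lemma \ref{InnerxyColl}. For the polar subcase, I would use that the symplecton is independent of the polar pair within it, so $S(u,v) = S(x,y)$, and pick an extremal $w$ strongly commuting with both $u$ and $v$; such a $w$ exists by the polar-space structure of the symplecton. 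Applying the extremal identity to $u + w$ (which is itself extremal) together with the Jacobi identity and the relations $[u, v] = [u, w] = [v, w] = 0$, I would rewrite $[u, [v, z]]$ as a linear combination of terms each of the form handled by the strongly commuting case, hence lying in $I$.

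For the inclusion $I \subseteq I_{\{x,y\}}$, I would first note that $I_{\{x,y\}}$ is proper: if it equalled $L$, then by Theorem \ref{InnerIdealComm} $L$ itself would satisfy $[L,L] = 0$, contradicting simplicity. Theorem \ref{InnerIdealComm} then gives that every non-zero element of $I_{\{x,y\}}$ is pure extremal and $[I_{\{x,y\}}, I_{\{x,y\}}] = 0$. Consequently the set $P := \{[u] : 0 \ne u \in I_{\{x,y\}}\}$ is contained in $\PExtr$, contains $[x]$ and $[y]$, and is a subspace of $(\PExtr, \FExtr)$ because any line between two of its collinear points is $\{[\alpha u + \beta v]\} \subseteq P$. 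To conclude $S(x,y) \subseteq P$ I would verify the convex-closure property of Definition \ref{df-symp}: given polar $[u], [v] \in P$ and any $[w]$ collinear with both, I would produce $z \in L$ with $[u, [v, z]] = \lambda w$ for some $\lambda \ne 0$, forcing $w \in I_{\{x,y\}}$.

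The main obstacle is the polar subcase, which appears in both directions: one must show that $[u, [v, z]]$ lies in $\langle S(x,y)\rangle$ for polar $(u,v)$ in the symplecton, and conversely that every extremal point of the symplecton is realized as such a double bracket for some $z$. When $\mathrm{char}(k) \ne 3$ this is cleanly handled by the Peirce decomposition of the Jordan pair attached to the commuting extremal pair $(u,v)$, essentially \cite[Prop.~13.65]{Fernandez}. In characteristic $3$ the Jordan identities degenerate, so the argument must instead rely on the more elaborate extremal identities of \cite{CSUW}, which is consistent with the characteristic-$3$ care taken elsewhere in the paper.
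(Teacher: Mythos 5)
There are two genuine gaps. First, your treatment of the polar subcase of the inner-ideal verification does not work. The extremal identity applied to $u+w$, where $w$ strongly commutes with $u$, only expands to $2[u,[w,z]]+2g(u,z)u+2g(w,z)w$, i.e.\ it reproduces Lemma \ref{InnerxyColl} for the pair $(u,w)$; it contains no occurrence of the cross term $[u,[v,z]]$. To extract $[u,[v,z]]$ from a square of an extremal element you would need an extremal element involving both $u$ and $v$ with nonzero coefficients, and none is available: $(u,v)$ polar means precisely that $k\cdot u+k\cdot v\not\subseteq E\cup\{0\}$, and $v$ does not even lie in the linear span of the extremal elements of $S(u,v)$ strongly commuting with $u$ (in type $\mathsf{A}_n$, with $u=v_1\varphi_1$ and $v=v_2\varphi_2$ rank-one maps, that span is $\langle v_1\varphi_1,\,v_1\varphi_2,\,v_2\varphi_1\rangle$, which misses $v_2\varphi_2$). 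So no linear rearrangement reduces the polar double bracket to strongly commuting ones. The paper's proof is necessarily more elaborate: it analyses whether the pairs $(x,z)$ and $(y,z)$ are special or hyperbolic, uses \cite[Lemma 2(iv)]{CohIvan2} in the special case to identify $[x,[y,z]]$ with a scalar multiple of a point of $S(x,y)$, and handles the doubly hyperbolic case by a further geometric argument with hyperplanes of $x^\perp$. Your fallback citation of \cite[Proposition 13.65]{Fernandez} is also misplaced: the paper invokes that result for the strongly commuting case (Lemma \ref{InnerxyColl}), not the polar one.

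Second, your argument for $I\subseteq I_{\{x,y\}}$ begins with a false claim: Theorem \ref{InnerIdealComm} gives that every nonzero element of a proper inner ideal is ad-nilpotent of index $3$, not that it is pure extremal. Indeed $x+y$ is a nonzero element of $I_{\{x,y\}}$ that is \emph{not} extremal, precisely because $(x,y)$ is polar; so your set $P$ is not contained in $\PExtr$ and is not the point set of a subspace of $(\PExtr,\FExtr)$. (That inner ideals are nonetheless \emph{spanned} by extremal elements is the hard content of Section \ref{S-InnerAreShads} and cannot be assumed here.) Your final idea --- producing $z$ with $[u,[v,z]]$ a nonzero multiple of a prescribed point collinear with both --- is essentially what the paper does: it uses the local diameter $3$ of $(\Extr,\FExtr)$ to find $u$ with $[x,[y,u]]$ equal to any given point of $x^\perp\cap y^\perp$, and then passes to the convex closure to get $S(x,y)\subseteq I_{\{x,y\}}$. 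But as written, the framework around that idea does not stand.
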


\begin{proof}
Let $z\in \Extr$ be such that $[x,[y,z]]$ is nonzero.
For the proof that $I$ is an inner ideal, it suffices to show that $[ x,[ y, z]]\in I$. 
Because $x$ and $y$ commute, we have $[x,[y,z]]=[y,[x,z]]$ by Jacobi's identity. Since this Lie product is nonzero, each of the pairs $(x,z)$ and $(y,z)$ is  not commutative and therefore either special or hyperbolic.

Suppose that $(y,z)$ is special. Then $[y,z]$ belongs to $\Extr$ and is collinear (that is, strongly commuting) with $y$, so, according to \cite[Lemma 2(iv)]{CohIvan2}, there is a pure extremal element $y'\in S(x,y)$ collinear with each of $x$, $y$, and $[y,z]$.
This implies that $x$ and $[y,z]$ are special (as their Lie bracket is nonzero, they are either hyperbolic or special, but they have mutual distance $2$ and so cannot be hyperbolic) and $[x,[y,z]]$ is a scalar multiple of $y'$. Since $y'$ belongs to $S(x,y)$, we conclude that $k\cdot [x,[y,z]]$ is a point of $S(x,y)$ and so lies in $I$.

In view of the symmetry, we are also done if the pair $x$, $z$ is special. The case remains where both pairs $(x,z)$ and $(y,z)$ are hyperbolic. 

Since $\FExtr\ne\emptyset$ and $g$ is non-degenerate, there is a line on each point (cf.~\cite[Lemma 4]{CohIvan2}).
Pick a line on $y$ in $S(x,y)$. It contains a point $v$ such that $(v,z)$ is special. If $v$ is not collinear with $x$, then there is a pure extremal element $w$ in $k\cdot v+k\cdot y$ that is collinear with $x$ and distinct from $v$ and $y$. Now $[x,[w,z]]\in k\cdot x+k\cdot w\subseteq I$ by Lemma \ref{InnerxyColl} and $[x,[v,z]]\in  S(x,v) =S(x,y)$ by the previous paragraph, so
$[x,[y,z]]$, being a linear combination of $[x,[w,z]]$ and $[x,[v,z]]$, also belongs to $I$.

We are left with the case where $(x,z)$ and $(x,y)$ are hyperbolic and each point $w$ of $\PExtr$ strongly commuting with both $x$ and $y$ lies inside $ z^{\perp_g}$, the hyperplane of $L$ of elements $p$ with $g(z,p) =0$, so $(w,z)$ is special.
Now consider again a point $b$ in $S(x,y)$ collinear with $y$ but not with $x$. We will write $x^\perp$ for the set of points of $\PExtr$ collinear with $x$. It is a subspace of $(\PExtr,\FExtr)$. Since the set $x^\perp\cap b^\perp$  of points collinear with both $x$ and $b$ is a geometric hyperplane of the subspace $x^\perp$ (cf.~\cite{buek1}) that differs from $x^\perp\cap y^\perp$, it has a point that is (not special and hence) hyperbolic with $z$. In particular, by the previous paragraph, $[x,[b,z]]$ belongs to $\langle S(x,b)\rangle=I$. Let $a$ be the element of $\Extr$ on the line $k\cdot b+k\cdot y$ of $S(x,y)$ that is collinear with $x$. Now $y$ is a linear combination of $a$ and $b$, while both $[x,[a,z]]$ and $[x,[b,z]]$ belong to $I$ (by Lemma \ref{InnerxyColl} for $a$), we conclude that $[x,[y,z]]$ belongs to $I$.

It remains to show that $ \langle S(x,y)\rangle=I_{\{x,y\}}$. Since the left hand side is an inner ideal containing $x$ and $y$, it contains the right hand side. In order to prove the other inclusion, we assume that $z$ is a pure extremal element collinear with both $x$ and $y$. Since the local diameter of $(\Extr,\FExtr)$ equals $3$ (that is, the diameter of the point-line space whose points are the lines on a point, say $q$, and whose lines are the sets of lines on $q$ contained in a singular space on $q$ of singular rank $2$), there is a pure extremal element $w$  in $y^\perp\cap z^\perp$ such that $(x,w)$ is special with $[x,w] = z$. For the same reason, there is a pure extremal element $u$ in $w^\perp$ such that $(y,u)$ is special and $[y,u]=w$. Now $z = [x,[y,u]]$ belongs to $I_{\{x,y\}}$. This shows that $x^\perp\cap y^\perp$ is contained in $I_{\{x,y\}}$. In particular, the convex closure of $\{x,y\}$  lies in $ I_{\{x,y\}}$. Since by definition $S(x,y)$ is this convex closure, we have $S(x,y)\subseteq I_{\{x,y\}}$, and so $\langle S(x,y)\rangle = I_{\{x,y\}}$.
\end{proof}

Next we consider commuting pairs in the symplectic case.
For a vector space $V$, we write $\gl(V)$ to denote the Lie algebra of all linear maps $V\to V$ and (as before) $\sl(V)$ for the Lie subalgebra of all maps with zero trace.
\begin{lemma}\label{ex:symplectic}
Let $L$ be a  finite-dimensional simple symplectic Lie algebra over $k$. To be more precise, let $V$ be a vector space over $k$ of dimension at least $2$ and $B$ a non-degenerate alternating form on $V$. Define
\[\gl(V)_B=\left\{X\in\gl(V)\mid B(u,Xv)=B(v,Xu)\phantom{xxx}\text{for all }u,v\in V\right\},\]
and, similarly, $\sl(V)_B = \sl(V)\cap \gl(V)_B$. Then $L$ is the quotient of $\sl(V)_B$ by the center.
For $a,b\in V$ we denote by $D_{a,b}$ the element of $\gl(V)_B$ determined by \[D_{a,b}(v) = B(v,b)a+B(v,a)b\phantom{xxx}\text{for all }v\in V\]
We then have the following identities,  where $X\in \gl(V)_B$ and $D_a = \frac12 D_{a,a}$. 
\[\begin{array}{rcl}
\left[D_a,\left[D_b,X\right]\right]&=&B(a,Xb)\cdot D_{a,b}\\
D_{a,b}&=&D_{a+b}-D_a-D_b\\
\left[D_a,D_b\right]&=&-B(a,b)\cdot D_{a,b}\\

\left[D_{a,b},\left[D_{a,b},X\right]\right]&=&2B(a,Xb)D_{a,b}-2B(Xa,a)D_b-2B(Xb,b)D_a\\
\left[D_{a},\left[D_{a,b},X\right]\right]&=&0\\
\end{array}\]
Each extremal element of $\sl(V)_B$ is of the form $D_a$ for some nonzero $a\in V$, where $D_a(v)=B(v,a)a$.
The polar space associated with $(V,B)$ is isomorphic to the point-line space $\left(\PExtr,\SExtr\right)$. 
For $a$, $b$ in $V$ with $B(a,b)=0$, put \[S(a,b) = \left\{k\cdot D_c\mid c\in k\cdot a + k\cdot b,\ c\ne0\right\}\] Then $S(a,b)=S(D_a,D_b)$ is a line of $(\PExtr(\sl(V)_B),\SExtr(\sl(V)_B))$ and the subspace
$\langle S(a,b)\rangle $ of $\sl(V)_B$ is the inner ideal spanned by $ k\cdot D_a$, $k\cdot D_b$, and $k\cdot D_{a+b}$. Consequently, the image of $S(D_a,D_b)$ in $L$ spans an inner ideal of $L$.
\end{lemma}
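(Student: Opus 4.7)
The plan is to exploit the explicit identities listed in the statement and to recover the symplectic polar space directly from the algebraic behaviour of the operators $D_a$.

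First I would establish the claim on extremal elements. Setting $b=a$ in $[D_a,[D_b,X]]=B(a,Xb)D_{a,b}$ gives $[D_a,[D_a,X]]=2B(a,Xa)D_a$, so every nonzero $D_a$ is extremal. For the converse I would argue that an extremal $X\in\sl(V)_B$ acts on $V$ as a rank-one $B$-symmetric operator and therefore has the form $X=\lambda D_a$ for $a$ spanning its image; this is the classical classification of rank-one elements of the symplectic algebra. This step is the main obstacle of the proof since it does not follow from the listed identities, and I would invoke it from the literature on classical Lie algebras.

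Next I would read off the polar structure. From $[D_a,D_b]=-B(a,b)D_{a,b}$ the pair $(D_a,D_b)$ commutes iff $B(a,b)=0$. To confirm $\FExtr(\sl(V)_B)=\emptyset$, I would compute $[D_a+D_b,[D_a+D_b,X]]$ under the assumption $B(a,b)=0$ using the first identity together with $B(a,Xb)=B(b,Xa)$, and observe that, as $X$ ranges over $\sl(V)_B$, the resulting expression is not forced to be a multiple of $D_a+D_b$; hence $D_a+D_b$ is generally not extremal and $(D_a,D_b)$ is never strongly commuting. The polar case of Definition~\ref{df-symp} therefore applies: a point $k\cdot D_c$ commutes with both $k\cdot D_a$ and $k\cdot D_b$ iff $c\in(ka+kb)^{\perp_B}$, and a further application of $\perp_B$ (using non-degeneracy of $B$) returns $ka+kb$. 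Hence $S(D_a,D_b)=S(a,b)$, and the map $k\cdot a\mapsto k\cdot D_a$ is an isomorphism from the symplectic polar space of $(V,B)$ onto $(\PExtr,\SExtr)$.

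Finally I would pin down the inner ideal. Expanding $D_c=\tfrac12 D_{c,c}$ for $c=\lambda a+\mu b$ yields $D_c=\lambda^2 D_a+\lambda\mu D_{a,b}+\mu^2 D_b$, so $\langle S(a,b)\rangle=\mathrm{span}(D_a,D_b,D_{a,b})$; the relation $D_{a,b}=D_{a+b}-D_a-D_b$ rewrites this as $\mathrm{span}(D_a,D_b,D_{a+b})$, and linear independence of $D_a$, $D_b$, $D_{a,b}$ for $a,b$ linearly independent is a short check using non-degeneracy of $B$. Call this $3$-dimensional subspace $I$. To verify $[I,[I,\sl(V)_B]]\subseteq I$, I would go through the four displayed brackets: $[D_a,[D_a,X]]$, $[D_a,[D_b,X]]$, $[D_{a,b},[D_{a,b},X]]$, and $[D_a,[D_{a,b},X]]$ all land in $I$ (the last being $0$), and the remaining brackets reduce to these via Jacobi together with the commutation relations $[D_a,D_b]=[D_a,D_{a,b}]=[D_b,D_{a,b}]=0$ that follow from $B(a,b)=0$ and the identity $[D_a,D_b]=-B(a,b)D_{a,b}$. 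Since $L$ is the quotient of $\sl(V)_B$ by its centre and images of inner ideals are inner ideals, the concluding statement about $L$ is immediate.
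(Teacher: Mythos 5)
Your proposal is correct and follows essentially the same route as the paper: extremality of $D_a$ from the first identity with $b=a$, the converse deferred to the literature (the paper cites Cuypers--Fleischmann for exactly this), the polar-space identification via the third identity, and the inner-ideal property via the spanning computation $D_{\lambda a+\mu b}=\lambda^2D_a+\lambda\mu D_{a,b}+\mu^2D_b$ together with the displayed double brackets. Your extra details (the double-perp argument for $S(D_a,D_b)=S(a,b)$ and the explicit check that no pair $(D_a,D_b)$ is strongly commuting) only make explicit what the paper leaves implicit.
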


\begin{proof}
The identities are easily verified. The first identity with $a=b$  shows that $D_a$ is an extremal element of $\gl(V)_B$. It is known (see \cite[\S3]{CuypersFleischmann}) that each extremal element of $L$ is the image under the quotient map of an element of this form.

Let $P(V,B)$ be the polar space with point set $\ProjSp(V)$ whose lines are the sets of points of a $2$-dimensional subspace of $V$ on which $B$ vanishes (a \emph{singular subspace} of $V$ with respect to $B$).   Two points $k\cdot a$ and $k\cdot b$  of $\ProjSp(V)$ are collinear in this polar space if and only if $B(a,b) = 0$, and so
the third identity shows that this is equivalent to $[D_a,D_b]=0$, that is, the commuting of the pair $(D_a,D_b)$. Also, in the case where $B(a,b) = 0$, the sets $S(a,b)$ (defined in the statement) and $S(D_a,D_b)$ (cf.~Definition \ref{df-symp}) both describe the line in $\SExtr$ corresponding to the line on $k\cdot a $ and $k\cdot b$ in $P(V,B)$, so  they coincide and the map assigning $D_a$ to $a$ is an isomorphism  $P(V,B)\to \left(\PExtr,\SExtr\right)$ of point-line spaces. 

 The second identity shows that $\langle S(a,b)\rangle $ is spanned by $ k\cdot D_a$, $k\cdot D_b$, and $k\cdot D_{a+b}$. The first, fourth, and fifth identities together show that each element of the form $[D_c,[D_d,\gl(V)_B]]$ for $c,d,\in \{a,b,a+b\}$ is contained in $\langle S(a,b)\rangle $. This implies that $\langle S(a,b)\rangle $ is an inner ideal of $\gl(V)_B$ and of $\sl(V)_B$.
\end{proof}

\begin{proposition}\label{ShadowsInnerProposition}
Let $L$ be a finite-dimensional simple Lie algebra generated by $\Extr(L)$ and $T$ the shadow on $\PExtr(L)$ of a flag of $\B(L)$.
Then $\langle T\rangle$ is a commutative inner ideal of $L$. 
\end{proposition}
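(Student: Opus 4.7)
The plan is to reduce the proposition to pair-by-pair assertions about extremal points of $T$, relying on Lemma \ref{lm-strong}. That lemma tells us $T$ is a strong parapolar space of diameter at most two and that any two points of $T$ at mutual distance two lie together in a symplecton entirely contained in $T$. Under this reduction, commutativity of $\langle T\rangle$ and the inner-ideal condition $[\langle T\rangle,[\langle T\rangle,L]]\subseteq\langle T\rangle$ both become statements about finitely many extremal elements, each handled by one of the preceding lemmas.

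I would first establish commutativity. For any two points $p,q\in T$, their mutual distance in $T$ is $0$, $1$, or $2$. If at most $1$, any two pure extremal representatives $x_p,x_q\in\Extr$ with $p=k\cdot x_p$ and $q=k\cdot x_q$ are equal or strongly commuting, hence Lie-commuting. If the distance is $2$, the strong parapolar property places $p$ and $q$ in a common symplecton, and by Definition \ref{df-symp} every pair of points of a symplecton is commuting. Hence $[x_p,x_q]=0$ for every choice of representatives of points $p,q\in T$, and bilinearity then yields $[\langle T\rangle,\langle T\rangle]=0$.

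For the inner-ideal condition, it suffices to verify $[x_p,[x_q,z]]\in\langle T\rangle$ for $p,q\in T$ and $z\in L$. When $p=q$ this is the extremal identity $[x,[x,z]]=2g(x,z)\,x$. When $p\ne q$ are collinear, Lemma \ref{InnerxyColl} places $[x_p,[x_q,z]]$ inside $k\cdot x_p+k\cdot x_q\subseteq\langle T\rangle$. When $p$ and $q$ are at distance two, Lemma \ref{lm-strong} furnishes a symplecton $S$ with $p,q\in S\subseteq T$, on which $(x_p,x_q)$ is polar; then Lemma \ref{InnerxyPolar} (when $\FExtr\ne\emptyset$) or Lemma \ref{ex:symplectic} (in the symplectic case) forces $[x_p,[x_q,z]]\in\langle S\rangle\subseteq\langle T\rangle$. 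By linearity this extends to $[u,[v,z]]\in\langle T\rangle$ for all $u,v\in\langle T\rangle$.

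The main obstacle I anticipate is the subcase where $\FExtr(L)=\emptyset$ but $L$ is not of symplectic type, so that neither Lemma \ref{InnerxyPolar} nor Lemma \ref{ex:symplectic} applies directly to $L$ itself. To deal with this, I would pass to a separable field extension $K/k$ for which $\FExtr(L\otimes K)\ne\emptyset$, whose existence is guaranteed by the symplectic characterisation in Theorem \ref{RSSofL}; apply Lemma \ref{InnerxyPolar} inside $L\otimes K$ to conclude that $\langle S\rangle\otimes K$ is an inner ideal of $L\otimes K$; and then descend via $\langle S\rangle=(\langle S\rangle\otimes K)\cap L$, using that the inner-ideal inclusion is $k$-linear and hence survives intersection with $L$.
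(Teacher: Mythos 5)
Your proposal is correct and follows essentially the same route as the paper's proof: reduce to pairs of extremal points of $T$ via Lemma \ref{lm-strong}, handle collinear pairs with Lemma \ref{InnerxyColl}, polar pairs with Lemma \ref{InnerxyPolar} when $\FExtr\ne\emptyset$, pass to a separable extension and descend by intersecting with $L$ when $\FExtr=\emptyset$ but some extension has lines, and invoke Lemma \ref{ex:symplectic} in the remaining symplectic case. The paper performs the descent at the level of $I\otimes K$ rather than $\langle S\rangle\otimes K$, but this is an immaterial difference.
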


\begin{proof}
Write $I = \langle T\rangle$.  If $T$ is an extremal point, then, as has already been observed in the introduction, $I$ is an inner ideal. 

By Lemma \ref{lm-strong}, every pair of points from $T$ is commuting and, if $(x,y)$ is a polar pair in $T$, then the symplecton $S(x,y)$ is fully contained in $T$.
Therefore, $I$ is commutative and it suffices to show for pairs of extremal elements $x$ and $y$ that, 
\begin{itemize}\item if $(x,y)$ is collinear, then $[x,[y,L]]\subseteq\langle  x, y\rangle$ and
\item if $(x,y)$ is polar, then $[x,[y,L]]\subseteq \langle S(x,y)\rangle$.
\end{itemize}
Indeed, then we have $I_{x,y}\subseteq I$ for every pair $(x,y)$ of points of $T$.

The first statement is proved in  Lemma \ref{InnerxyColl}. In case $\FExtr\ne\emptyset$, the second statement is proved in Lemma \ref{InnerxyPolar}. 
If  $\FExtr(L\otimes K)\ne\emptyset$ for some separable extension $K$ of $k$, then, the same lemma gives that $I\otimes K$  is an inner ideal of $L\otimes K$ (for, then $L\otimes K$ is spanned by $E(L\otimes K)$  because $L$ is spanned by $E(L)$), from which it follows that $[I,[I,L]] \subseteq (I\otimes K)\cap L = I$, so $I$ is an inner ideal of $L$.
We are left with the case where $\FExtr(L\otimes K)=\emptyset$ for every separable extension $K$ of $k$. By Theorem  \ref{RSSofL} this implies that $L$ is of symplectic type, so $L$ is as in Lemma \ref{ex:symplectic}, from which it follows that $S(x,y)$  spans an inner ideal of $I$. 
We conclude that the proposition also holds in the symplectic case.
\end{proof}

In the proof of the converse statement, given in the next section, we will need more detailed information on the extremal elements of Lie algebras of orthogonal type. Therefore we end this section with the following lemma. Since the proof is straightforward, it is omitted.
\begin{lemma}\label{lmOrth} Let $B$ be a non-degenerate symmetric bilinear form on a vector space $V$ over a field $k$ of characteristic distinct from two. For $a,b,x\in V$, write \[ D_{a,b}(x) = B(x,a)b-B(x,b)a\]
This is an element of $\gl(V)_B$.
For any pair of vectors $a$, $b$ in $V$ and $X\in \gl(V)_B$, we have
\[\begin{array}{rcl}\left[D_{a,b},X\right]&=&D_{Xb,a}-D_{Xa,b}\\
\left[D_{a,b},\left[D_{a,b},X\right]\right]&=&B(a,b)\left(D_{a,Xb}+D_{b,Xa}\right) -2B(a,Xb)D_{a,b}
\\&&
+B(a,a)D_{Xb,b}+B(b,b)D_{Xa,a}\\
\end{array}\]
Thus, if $\langle a,b\rangle$ is a singular subspace of $V$, then $D_{a,b}$ is an extremal element of $\sl(V)_{B}$ with
$\left[D_{a,b},\left[D_{a,b},X\right]\right]=2B(Xa,b)D_{a,b}$. If $V$ has dimension at least $4$, then all extremal elements of the Lie algebra $\sl(V)_B$ have this form.
For  $a,b,c,d$ in $V$ and $X\in\gl(V)$, we have
\[\begin{array}{rcl}
\left[D_{a,b},D_{c,d}\right]&=&B(b,c)D_{d,a}+B(b,d)D_{a,c}+B(a,d)D_{c,b}+B(a,c)D_{b,d}\\
\left[D_{a,b},\left[D_{c,d},X\right]\right]&=&B(b,Xd)D_{c,a}-B(a,Xd)D_{c,b}-B(b,Xc)D_{d,a}+B(a,Xc)D_{d,b}\\
&&+B(b,d)D_{Xc,a}-B(a,d)D_{Xc,b}-B(b,c)D_{Xd,a}+B(a,c)D_{Xd,b}\\
\end{array}\]
\end{lemma}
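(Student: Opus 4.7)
The plan is to verify every stated identity by direct expansion from the defining formula $D_{a,b}(x)=B(x,a)b-B(x,b)a$, using that any $X\in\gl(V)_B$ is skew-adjoint with respect to the symmetric form $B$, so that $B(Xu,v)=-B(u,Xv)$. Only the classification of all extremal elements (in dimension at least $4$) requires a genuine structural argument; everything else is mechanical.

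First, to see $D_{a,b}\in\gl(V)_B$, I would expand $B(u,D_{a,b}v)=B(v,a)B(u,b)-B(v,b)B(u,a)$ and observe that this is antisymmetric in $u,v$; trace zero then follows at once. To obtain $[D_{a,b},X]$, compute $D_{a,b}(Xx)-X(D_{a,b}x)$ and rewrite $B(Xx,a)=-B(x,Xa)$ via skew-adjointness; collecting terms produces the first identity $[D_{a,b},X]=D_{Xb,a}-D_{Xa,b}$. The remaining identities are obtained by iterating this step: substituting the formula for $[D_{a,b},X]$ into itself gives $[D_{a,b},[D_{a,b},X]]$; the special case $X=D_{c,d}$ yields $[D_{a,b},D_{c,d}]$; and substituting $D_{c,d}$ in place of $X$ produces $[D_{a,b},[D_{c,d},X]]$. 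In each case the result is reorganized as a linear combination of $D_{\cdot,\cdot}$ operators using the bilinearity of the defining formula.

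The extremality statement in the singular case is then an immediate specialization of the general double-bracket formula: when $B(a,a)=B(b,b)=B(a,b)=0$, three of the four terms vanish, leaving $-2B(a,Xb)D_{a,b}=2B(Xa,b)D_{a,b}$, so $\ad_{D_{a,b}}^2\sl(V)_B\subseteq k\cdot D_{a,b}$.

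The substantive step is the converse classification for $\dim V\geq 4$. I would identify $\sl(V)_B$ with $\Lambda^2 V$ via $a\wedge b\leftrightarrow D_{a,b}$, so that the operators $D_{a,b}$ correspond precisely to decomposable wedges. Given an extremal $Y\in\sl(V)_B$, applying $\ad_Y^2$ to general elements of $\sl(V)_B$ and invoking the structure of the double-bracket formula above forces $Y$ to have operator rank two, whence $Y=D_{a,b}$ for some linearly independent $a,b$. Substituting back and varying $X$, the ``unwanted'' tensors $D_{a,Xb}+D_{b,Xa}$, $D_{Xb,b}$, $D_{Xa,a}$ can be made linearly independent of $D_{a,b}$ as soon as $\dim V\geq 4$, so extremality forces each of the scalar coefficients $B(a,b)$, $B(a,a)$, $B(b,b)$ to vanish, i.e., $\langle a,b\rangle$ is singular. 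The main obstacle is orchestrating this rank-reduction cleanly and choosing test vectors so that the decomposition of the relevant wedges becomes unambiguous; the dimension hypothesis is needed both to house a two-dimensional singular subspace and to supply enough independent test vectors for the coefficient-vanishing argument.
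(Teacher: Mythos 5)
The paper gives no proof to compare against---it declares the lemma straightforward and omits the argument---so your proposal has to stand on its own. The computational part does: expanding $D_{a,b}(Xx)-X(D_{a,b}x)$ and rewriting $B(Xx,a)=-B(x,Xa)$ yields $[D_{a,b},X]=D_{Xb,a}-D_{Xa,b}$, and the other three displayed identities follow by the substitutions you describe; I checked that they reproduce the stated right-hand sides after using $D_{u,v}=-D_{v,u}$ and $B(v,Xv)=0$. One small point worth recording: the final identity also requires $X$ to be skew-adjoint (i.e.\ $X\in\gl(V)_B$ rather than $\gl(V)$ as the statement has it), since your derivation of the inner bracket $[D_{c,d},X]=D_{Xd,c}-D_{Xc,d}$ already uses skew-adjointness.

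The genuine gap is in the converse classification for $\dim V\ge4$. The sentence ``applying $\ad_Y^2$ \dots forces $Y$ to have operator rank two'' is the entire content of that claim and is asserted rather than derived; it is precisely where the hypotheses on $\dim V$ and $\charac(k)$ enter. Closing it requires three steps. (a) $Y$ is nilpotent on $V$: extremality gives $\ad_Y^3=0$, so over the algebraic closure all eigenvalues of $\ad_Y$ vanish; the eigenvalues of a skew-adjoint $Y$ come in pairs $\pm\lambda_i$ and $\ad_Y$ on $\Lambda^2V\cong\sl(V)_B$ has eigenvalues $\lambda_i+\lambda_j$, which for $\dim V\ge4$ cannot all be zero unless every $\lambda_i$ is. (b) $Y^2=0$: you must exclude Jordan blocks of size $3$; for $\charac(k)\ne3$ such a block forces $\ad_Y$-nilpotency index greater than $3$, but in characteristic $3$ such elements survive that test and are ruled out only because $\ad_Y^2L$ is then more than one-dimensional---this is exactly the computation the paper performs in the case $n_3>0$ of the proof of Proposition~\ref{lmIOrd3EltSpByE}, and your sketch does not engage with it. (c) Rank exactly $2$: a square-zero skew-adjoint $Y$ has totally singular image and decomposes as a sum of $D_{a_i,b_i}$ on pairwise orthogonal singular planes, and with two or more summands $\ad_Y^2L$ again exceeds one dimension (the $m=2$, $n_2\ge2$ computation in the same proposition). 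Note that once (b) and (c) are in place, $\langle a,b\rangle=\operatorname{im}Y$ is automatically singular, so your final coefficient-vanishing argument is not needed. Alternatively, you could simply cite the known classification of extremal (long root) elements of the orthogonal Lie algebras, as the paper does for the symplectic case in Lemma~\ref{ex:symplectic} via \cite{CuypersFleischmann}.
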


\section{Inner ideals are spanned by shadows}\label{S-InnerAreShads}
Throughout this section, we let $k$ be a field of characteristic $p$ (possibly $0$) distinct from $2$. 
Our goal is the following converse of Proposition \ref{ShadowsInnerProposition}.

\begin{theorem}\label{th-InnerIsShadow} \label{thInnIdIsSh}
Suppose that $L$ is a finite-dimensional simple Lie algebra generated by $\Extr(L)$ and $I$ is an inner ideal of $L$. 
Assume that $k$ is separably closed. Then the set $\PExtr_I$ of all extremal points contained in $I$ is a shadow $T$ on $\Extr(L)$ of a flag of $\B(L)$ such that $I$ is the span of $T$.
\end{theorem}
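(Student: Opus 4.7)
\emph{Initial reduction.} The plan is to combine the algebraic control on $I$ from Benkart's theorem with a geometric classification of convex commuting subspaces of the root shadow space of $\B(L)$. First I would apply Theorem~\ref{InnerIdealComm}: since $I$ is proper, $[I,I]=0$ and every nonzero element of $I$ is ad-nilpotent of index exactly $3$, hence a pure extremal element. Consequently $I\setminus\{0\}\subseteq\Extr(L)$, which forces $I=\langle\PExtr_I\rangle$ automatically. The statement therefore reduces to identifying $\PExtr_I$ with the shadow on $\PExtr$ of a flag of $\B(L)$.

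\emph{Geometric closure of $\PExtr_I$.} Next I would examine $\PExtr_I$ as a subset of the relevant point-line space associated with $L$. For any two points $k\cdot x, k\cdot y\in\PExtr_I$ the identity $[I,I]=0$ gives $[x,y]=0$, so the pair $(x,y)$ is same-point, strongly commuting, or polar, the special and hyperbolic cases being ruled out. If strongly commuting, every nonzero element of $k\cdot x+k\cdot y$ lies in $I\cap\Extr(L)$, so the whole line through $k\cdot x$ and $k\cdot y$ lies in $\PExtr_I$. If polar, Lemma~\ref{InnerxyPolar} (or Lemma~\ref{ex:symplectic} in the symplectic case $\FExtr=\emptyset$) yields $\langle S(x,y)\rangle\subseteq I_{\{x,y\}}\subseteq I$, and so $S(x,y)\subseteq\PExtr_I$. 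Thus $\PExtr_I$ is a convex subspace, of $(\PExtr,\FExtr)$ in the generic case and of the polar space $(\PExtr,\SExtr)$ in the symplectic case, in which every pair of points commutes; in particular it has diameter at most $2$.

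\emph{Identification as a flag shadow.} The heart of the argument is to prove that every such convex commuting subspace must arise as a shadow of a flag of $\B(L)$. In the symplectic case of Lemma~\ref{ex:symplectic}, extremal points of $L$ correspond to projective points of $V$ and commutativity of a pair corresponds to $B$-orthogonality, so $\PExtr_I$ becomes a totally singular subspace of $(V,B)$, which is precisely a shadow on type $1$ of a flag of the symplectic building. In the non-symplectic cases $\B(L)$ has one of the connected types enumerated in Theorem~\ref{RSSofL}, and I would appeal, type by type, to the known characterization of shadows of flags on a root shadow space as exactly its convex commuting subspaces. The orthogonal case would be handled through the explicit coordinates in Lemma~\ref{lmOrth}, while the linear and exceptional cases would use the concrete models of the root shadow spaces of types $\mathsf{A}_{n,\{1,n\}}$, $\mathsf{E}_{6,2}$, $\mathsf{E}_{7,1}$, $\mathsf{E}_{8,8}$, $\mathsf{F}_{4,1}$, and $\mathsf{G}_{2,2}$ recorded in Section~\ref{S-geom}. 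The separable closedness of $k$ ensures that each shadow space contains enough rational points for these classifications to apply directly, bypassing Galois-descent arguments.

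\emph{Main obstacle.} The principal difficulty is precisely this geometric classification, especially for the exceptional types $\mathsf{E}_6$, $\mathsf{E}_7$, $\mathsf{E}_8$, $\mathsf{F}_4$, and $\mathsf{G}_2$, and for characteristic $3$, where the Jordan-algebra shortcut available for $p\ne 2,3$ is no longer directly applicable and one must manipulate the extremal identities in $L$ by hand. Once the classification is secured, the identity $I=\langle\PExtr_I\rangle$ established at the outset automatically promotes the geometric equality $\PExtr_I=T$ into the algebraic equality $I=\langle T\rangle$, completing the proof.
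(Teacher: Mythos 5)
Your initial reduction contains a false inference that removes the hardest part of the proof. From Theorem~\ref{InnerIdealComm} you correctly get that every nonzero element of a proper inner ideal $I$ is ad-nilpotent of index $3$, but you then conclude ``hence a pure extremal element.'' The implication goes the wrong way: extremal elements are ad-nilpotent of index at most $3$, but an element $x$ with $\ad_x^3=0$ need not satisfy $[x,[x,L]]\subseteq k\cdot x$. A concrete counterexample already occurs in type $\mathsf{A}_n$: for $X=E_{12}+E_{34}$ in $\sl(V)$ one has $\ad_X^3=0$ while $\ad_X^2(\sl(V))$ is the four-dimensional span of $E_{ij}$ with $i\in\{1,3\}$, $j\in\{2,4\}$, so $X$ is not extremal; yet $X$ lies in the proper commutative inner ideal it generates. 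Consequently $I\setminus\{0\}\subseteq\Extr(L)$ is false in general, and the equality $I=\langle\PExtr_I\rangle$ is not ``automatic'' --- it is precisely the content of the paper's first main step, established via Lemmas~\ref{lmFernandezEnd} and \ref{lmFernandezFront} (a Jordan-algebra argument valid for $p\ne 2,3$) together with the lengthy case analyses of Propositions~\ref{lmIOrd3EltSpByE} and \ref{PropOrd3Exceptional}, which classify the ad-nilpotent elements of index $3$ via Jordan blocks for the classical types and via the Liebeck--Seitz tables for the exceptional types, and show in each case that $I_x$ is spanned by extremal elements (or is all of $L$). Your ``main obstacle'' paragraph attaches the characteristic-$3$ difficulty to the geometric classification, but the real characteristic-$3$ work in the paper lies in this spanning step, which your argument skips entirely.

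The second half of your proposal (convexity of $\PExtr_I$ and its identification as a flag shadow) is broadly in the spirit of the paper's second step, and the convexity argument via Lemmas~\ref{InnerxyColl}, \ref{InnerxyPolar} and \ref{ex:symplectic} is sound. However, ``the known characterization of shadows of flags on a root shadow space as exactly its convex commuting subspaces'' is not an off-the-shelf theorem one can simply cite type by type: the paper has to invoke Shult's classification of strong parapolar spaces (Theorem~\ref{CCShult1545}) and the metasymplectic characterization (Theorem~\ref{thMeta}), verify the hypothesis that $x^\perp\cap S$ has singular rank distinct from $r-2$, and then argue separately in each type --- for instance that in types $\mathsf{B}_n$, $\mathsf{D}_n$, $\mathsf{E}_8$ and $\mathsf{F}_4$ no proper inner ideal can strictly contain a symplecton. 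You would need to supply both the spanning step and these recognition arguments before the proof is complete.
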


The proof of this theorem consists of two main steps. The first is to show by algebraic methods that inner ideals are spanned by pure extremal elements. Once this is established, the set $\PExtr_I$ will be recognised as the shadow of a flag by means of geometric methods.
Since $L$ itself is known to be spanned by $\PExtr$ and one-dimensional inner ideals are spanned by elements of $\PExtr$, we may restrict ourselves to proper inner ideals of dimension at least $2$.

In order to establish that inner ideals are spanned by extremal elements, it suffices to show that inner ideals $I_x$ generated by an ad-ilpotent element $x$ of index $3$ (introduced at the beginning of Section \ref{S-shadsAreInner}). This is a consequence of the fact that each inner ideal is spanned by such elements (cf.~Theorem \ref{InnerIdealComm}).
For all characteristics except $2$ and $3$, this is proved in Lemma \ref{lmFernandezFront} below (following suggestions by Fern\'andez L\'opez).
\begin{lemma}\label{lmFernandezEnd}
Let $k$ be a field of characteristic distinct from $2$ and let $L$ be a finite-dimensional simple Lie algebra over $k$ generated by its pure extremal elements. 
Suppose that $a$ is an ad-nilpotent element of $L$ of index $3$ such that
\begin{enumerate}[(i)]
\item $a\in\ad_a^2(L)$, and
\item $\ad_{\ad_a^2(x)}^2 = \ad_a^2\ad_x^2\ad_a^2$ for each $x\in L$.
\end{enumerate}
Then the inner ideal $I_a$ of $L$ generated by $a$ is spanned by extremal elements of $L$.
\end{lemma}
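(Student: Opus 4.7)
The plan is to read conditions~(i) and~(ii) as saying that $a$ is a \emph{von Neumann regular} element of the Jordan triple system on $L$ with quadratic operator $Q_x=\tfrac12\ad_x^2$: condition~(i) is von Neumann regularity of $a$, while condition~(ii) is the Jordan fundamental identity $Q_{Q_ax}=Q_aQ_xQ_a$ localised at $a$. This situates $I_a$ as an inner ideal of a Jordan structure, where classical spanning theorems for nondegenerate Jordan systems become available.

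First I would identify $I_a$ with $\ad_a^2(L)$. The inclusion $\ad_a^2(L)\subseteq I_a$ is immediate; the reverse uses Benkart's theorem (Theorem~\ref{InnerIdealComm}) to get $[I_a,I_a]=0$, so the only closure operation capable of escaping $\ad_a^2(L)$ is $y\mapsto\ad_y^2(L)$ for $y=\ad_a^2(x)$, and condition~(ii) immediately confines the result back to $\ad_a^2(L)$. Next, starting from the witness $b$ with $\ad_a^2(b)=a$, I would construct an $\sl_2$-triple $(a,h,f)$ by a Jacobson--Morozov-style argument, if necessary replacing $b$ by $\tfrac12\ad_b^2(a)$ and invoking~(ii) to secure the required commutation relations even when $\charac(k)=3$. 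The $\ad_h$-eigenspace decomposition yields a five-term grading $L=\bigoplus_{i=-2}^{2}L_i$ with $a\in L_2$ and $I_a=\ad_a^2(L_{-2})\subseteq L_2$. Endowing $L_2$ with the Jordan product $u\cdot v:=\tfrac12[u,[f,v]]$ and applying~(ii) turns $(L_2,\cdot)$ into a unital Jordan algebra with unit $a$ in which $I_a$ is an inner ideal.

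The absence of sandwiches in $L$ (\cite[Corollary 4.5]{CSUW}) passes to nondegeneracy of $L_2$, and an element $u\in L_2$ is extremal in $L$ precisely when it has Jordan rank one. Invoking the Fern\'andez L\'opez / McCrimmon theorem that every inner ideal of a nondegenerate Jordan algebra is spanned by its rank-one elements, one concludes that $I_a$ is spanned by extremal elements of $L$.

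The main obstacle lies in the back-and-forth between the Lie and Jordan pictures: producing the $\sl_2$-triple cleanly from hypotheses~(i) and~(ii), verifying that the product induced on $L_2$ genuinely satisfies the Jordan axioms, and matching Lie-theoretic extremality with Jordan rank one through the no-sandwich property. Since characteristic three is exactly the case in which this lemma is needed for Theorem~\ref{MainTheorem}, one must scrupulously avoid any Jordan-theoretic step that is known only for $p\ne 2,3$, and route the final argument through the general nondegenerate Jordan spanning theorem.
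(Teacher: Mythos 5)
Your reading of (i) and (ii) as von Neumann regularity and the fundamental identity localised at $a$ is the right intuition, but the route you then take has two concrete failures, both located exactly at the characteristic-$3$ case for which this lemma is actually needed. First, the five-term grading: in characteristic $3$ the $\ad_h$-eigenvalues $2$ and $-1$ coincide, so the eigenspace decomposition of $\ad_h$ for an $\sl_2$-triple cannot separate $L_2$ from $L_{-1}$ and does not yield $L=\bigoplus_{i=-2}^{2}L_i$; Jacobson--Morozov is likewise not available in this generality, so the Jordan algebra $(L_2,\cdot)$ you want to work in is never constructed. Second, the theorem you invoke at the end --- that every inner ideal of a nondegenerate Jordan algebra is spanned by its rank-one elements --- is false in that generality (a Jordan division algebra of dimension greater than one is a nondegenerate inner ideal of itself containing no rank-one elements); supplying the missing isotropy for your $L_2$ would require producing enough rank-one elements inside $I_a$, which is essentially the statement being proved, so the argument risks circularity.

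The paper's proof is a short direct computation that bypasses all of this. By (i) write $a=\ad_a^2(e)$; since $L$ is spanned by $\Extr(L)$, write $e$ as a sum of extremal elements $d\in D$, so that $a=\sum_{d\in D}\ad_a^2(d)$. Now apply (ii) to each extremal $d$: using $\ad_d^2(y)=2g(d,y)\,d$, where $g$ is the form from the proof of Lemma \ref{InnerxyColl}, one gets
\[\ad_{\ad_a^2(d)}^2(x)=\ad_a^2\ad_d^2\ad_a^2(x)=2g\!\left(d,\ad_a^2 x\right)\ad_a^2(d),\]
so each $\ad_a^2(d)$ is extremal or zero and lies in $\ad_a^2(L)\subseteq I_a$; together with the observation that (i) and (ii) make $\ad_a^2(L)$ an inner ideal containing $a$, this exhibits $I_a$ as spanned by extremal elements, uniformly in the characteristic. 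The single idea you are missing is to apply hypothesis (ii) directly to the extremal elements that already span $L$, rather than to transport the problem into a Jordan algebra where rank-one elements must then be found anew.
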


\begin{proof}
By (i), there is an element $e\in L$ such that $a=\ad_{a^2}(e)$. Since $L$ is spanned by $\Extr(L)$, there is a finite subset $D$ of $\Extr$ such that  $a=\sum_{d\in D} d$. Consequently, $a=\sum_{d\in D}\ad_a^2( d)$. By (ii), for each $x\in L$, we have \[\ad_{\ad_a^2 (d)} (x)=  \ad_a^2\ad_d^2\ad_a^2 (x) =   2g(d,\ad_a^2 x)\ad_a^2 (d) \]
where $g$ is as in the proof of Lemma \ref{InnerxyColl}.
This shows that ${\ad_a^2 (d)} $ is extremal. We conclude that $a$ is a linear combination of the extremal elements $\ad_a^2( d)$, which belong to  $I_a$.
\end{proof}

For $p\ne2,3$, the conditions of the above lemma are satisfied:
\begin{lemma}\label{lmFernandezFront}
Let $k$ be a field of characteristic distinct from $2$ and $3$ and let $L$ be a finite-dimensional simple Lie algebra over $k$ generated by its pure extremal elements. 
Then, for each  ad-nilpotent element $a$ of $L$ of index $3$, the conditions of Lemma \ref{lmFernandezEnd} are satisfied, so $I_a$ is spanned by extremal elements of $L$. 
\end{lemma}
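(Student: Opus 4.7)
The plan is to embed $a$ in an $\mathfrak{sl}_2$-triple via the Jacobson--Morozov theorem, read off a short $\mathbb{Z}$-grading of $L$, and recognise condition~(ii) as the fundamental formula of a Jordan pair.

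Since $\charac k\ne 2,3$, the hypothesis $\ad_a^3=0$ gives $\ad_a^{p-1}=0$ (vacuously in characteristic~$0$, and because $p-1\ge 3$ otherwise), so Jacobson--Morozov applies and produces $h,f\in L$ with
\[
[h,a]=2a,\qquad [a,f]=h,\qquad [h,f]=-2f.
\]
The $\ad_h$-eigenspace decomposition is a $\mathbb{Z}$-grading $L=\bigoplus_i L_i$ with $a\in L_2$. Because $\ad_a^3=0$, every irreducible $\mathfrak{sl}_2$-submodule of $L$ has highest weight at most~$2$, so the grading has length five:
\[
L=L_{-2}\oplus L_{-1}\oplus L_0\oplus L_1\oplus L_2.
\]
Condition~(i) is then immediate: $\ad_a^2(f)=[a,h]=-2a$, so $a=-\tfrac12\ad_a^2(f)\in\ad_a^2(L)$.

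For condition~(ii), a weight count shows that for $x\in L_j$ and $y\in L_i$ both $\ad_{\ad_a^2(x)}^2(y)$ and $\ad_a^2\ad_x^2\ad_a^2(y)$ lie in $L_{i+2j+8}$, which is nonzero only when $i=j=-2$. It therefore suffices to verify the identity on $L_{-2}\times L_{-2}$, where it is precisely the fundamental formula
\[
Q_+\bigl(Q_+(a)x\bigr)\,y=Q_+(a)\,Q_-(x)\,Q_+(a)\,y
\]
for the Jordan pair $(L_2,L_{-2})$ with quadratic operators $Q_\pm(u)v=\tfrac12[u,[u,v]]$. The fact that any $5$-graded Lie algebra over a field of characteristic $\ne 2$ induces such a Jordan pair, and that Jordan pairs satisfy the fundamental formula, is standard (cf.~Loos, \emph{Jordan Pairs}, Ch.~I). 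Lemma~\ref{lmFernandezEnd} then yields the conclusion.

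The main obstacle is justifying Jacobson--Morozov in this generality---checking that, for $L$ a finite-dimensional simple Lie algebra generated by pure extremal elements over a field of characteristic $\ne 2,3$, the classical $\mathfrak{sl}_2$-triple construction goes through for every ad-nilpotent element of index~$3$. This is also where the exclusion of $p=3$ becomes essential: there $\ad_a^{p-1}=\ad_a^2$ need not vanish, and the recursive construction of $f$ and $h$ breaks down.
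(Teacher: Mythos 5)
There is a genuine gap, and it sits exactly where you flag it: the appeal to Jacobson--Morozov. The hypotheses allow $\charac(k)=p\ge 5$, and in positive characteristic the existence of an $\mathfrak{sl}_2$-triple through a given ad-nilpotent element is not a consequence of $\ad_a^{p-1}=0$ alone; the known analogues require extra input (a nondegenerate invariant form, good/large primes, etc.), and none of that is verified here. Worse, the argument is in danger of being circular: the standard route to an $\mathfrak{sl}_2$-triple is Morozov's lemma, whose essential input is precisely a statement of the form $a\in\ad_a^2(L)$ (or $a\in[h,a]$ with $h\in[a,L]$) --- that is, condition (i), the very thing you propose to deduce \emph{from} the triple. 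Even granting the triple, your five-term $\mathbb{Z}$-grading needs $\ad_h$ to act semisimply with eigenvalues in $\{-2,\dots,2\}$, which in characteristic $p$ is a further unproved claim about the $\mathfrak{sl}_2$-module structure of $L$. So the proof as written establishes the lemma only in characteristic $0$ for algebras where the classical Jacobson--Morozov theorem is available.

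For comparison, the paper's proof avoids gradings entirely. Condition (i) is obtained Jordan-theoretically: the Jordan algebra $L_a$ (product $x\bullet y=[[x,a],y]$, modulo the kernel of $\ad_a^2$) is nondegenerate and finite-dimensional, hence satisfies the descending chain condition on inner ideals, so the capacity existence theorem furnishes a unit element, and the existence of a unit translates back into $a\in\ad_a^2(L)$. Condition (ii) is not a statement about graded pieces at all but a formal identity in the associative algebra $\mathrm{End}(L)$: expanding $\ad_{[a,[a,x]]}=\ad_a^2\ad_x-2\ad_a\ad_x\ad_a+\ad_x\ad_a^2$, squaring, and using $\ad_a^3=0$ (with $2$ and $3$ invertible) gives $\ad_{\ad_a^2(x)}^2=\ad_a^2\ad_x^2\ad_a^2$ for \emph{every} $x\in L$. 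Your weight count and the identification of (ii) with the fundamental formula of the Jordan pair $(L_2,L_{-2})$ are correct once a five-term grading is in hand, but they are an unnecessary detour around an identity that holds unconditionally. If you want to salvage your approach, you would need to first prove (i) by some independent means and then invoke Morozov's lemma (which does work when the relevant nilpotency indices are less than $p$) --- at which point you have essentially reconstructed the paper's logical order.
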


\begin{proof}
Condition (i) follows from a powerful result concerning the Jordan algebra $L_a$, obtained by defining the multiplication $\bullet$ on $L$ by setting $x\bullet y = [[x,a],y]$ and dividing out the ideal $\left\{\left.z\in L\,\right|\, \ad_a^2z = 0\right\}$. This Jordan algebra is nondegenerate (see ~\cite[Theorem 8.51 (1)]{Fernandez}) and, having finite dimension, satisfies the descending chain condition on inner ideals, so, by the Capacity existence theorem (cf.~\cite[Theorem 20.1.3]{McCrimmon} or \cite[Theorem 8.61 (ii)]{Fernandez}),
the Jordan algebra $L_a$ has a unit, that is, an element $e$, such that $x\bullet e = x \bullet e = x$ for each $x \in L_a$.  By \cite[Theorem 8.61 (ii)]{Fernandez}), this implies $a\in\ad_a^2(L)$.

Condition (ii) is a straightforward consequence of computations in the associative algebra of endomorphisms of the vector space $L$, which can be found in \cite[Lemma 4.4(3)]{Fernandez}.
\end{proof}

By the way, in the setting of the above lemma, even more is true:  according to \cite[Proposition 4.6]{Fernandez}, the inner ideal $I_a$ coincides with $\ad_a^2L$.

As a consequence of the lemma, for the proof that $I_x$ for a nilpotent element $x$ of $L$ is spanned by extremal elements, only the case of characteristic $3$ is left. Nevertheless, we prove the spanning result for all characteristics distinct from $2$ in Propositions \ref{lmIOrd3EltSpByE} and \ref{PropOrd3Exceptional}.

\begin{proposition}\label{lmIOrd3EltSpByE}
Let $L$ be a finite-dimensional simple Lie algebra over $k$ spanned by extremal elements and suppose that the type of $\B(L)$ is classical, that is, one of $\mathsf{A}_n,\ldots,\mathsf{D}_n$. If  $p=3$, also allow for the type to be $\mathsf{G}_2$. Assume that $k$ is separably closed.  If $x$ an ad-nilpotent element of $L$, then the inner ideal $I_x$ of $L$ generated by $x$ is spanned by pure extremal elements.
\end{proposition}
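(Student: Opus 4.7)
For $p\ne 2,3$ the statement follows immediately from Lemma \ref{lmFernandezFront}, so I focus on $p=3$, where the Jordan-algebra shortcut (Capacity theorem, identification $I_x=\ad_x^2 L$) need not apply. The plan is to argue by case analysis on the type of $\B(L)$, exploiting the explicit matrix realizations of $L$ available because $k$ is separably closed. In each case the extremal elements are described in closed form: rank-one nilpotents $v\otimes\phi$ (with $\phi(v)=0$) for $\mathsf{A}_n$; the maps $D_a$ with $a\in V$ for the symplectic case $\mathsf{C}_n$ via Lemma \ref{ex:symplectic}; the $D_{a,b}$ with $\langle a,b\rangle$ totally singular for the orthogonal cases $\mathsf{B}_n,\mathsf{D}_n$ via Lemma \ref{lmOrth}; and images of rank-one nilpotents modulo the center for the $\mathsf{G}_2$ case arising from $L=\mathfrak{psl}_3$ in characteristic $3$.

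Within each classical type, the strategy is to put $x$ into a Jordan normal form appropriate to the bilinear form (or none, in type $\mathsf{A}$), decompose $x$ as a sum of elementary nilpotents indexed by Jordan blocks, and exhibit an explicit family of extremals lying in $I_x$. Using the identities of Lemmas \ref{ex:symplectic} and \ref{lmOrth} one computes enough elements of the form $[a,[b,y]]$ (with $a,b\in I_x$ and $y\in L$) to build $I_x$ iteratively and verifies that each is a $k$-linear combination of the identified extremals. The core step is the single-Jordan-block case, after which one combines contributions across blocks and exploits the symmetry of the matrix realization.

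For the exceptional $\mathsf{G}_2$ case with $L=\mathfrak{psl}_3$ in characteristic $3$, I would work upstairs in $\sl_3(k)$ and descend to the quotient by the line spanned by the identity (which lies in $\sl_3$ since $3=0$). The nonzero nilpotent orbits in $\sl_3$ are the rank-one class (Jordan type $[2,1]$) and the regular class (Jordan type $[3]$); for each representative, $I_x$ can be computed by hand in $\mathfrak{psl}_3$ and shown to be spanned by images of rank-one nilpotents.

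The main obstacle is precisely this characteristic-$3$ bookkeeping: without the Jordan-algebra machinery of Lemma \ref{lmFernandezFront}, the inner ideal $I_x$ must be constructed directly as the closure of $\{x\}$ under $(a,b)\mapsto[a,[b,L]]$, and vanishings forced by $3=0$ can invalidate arguments that work in other characteristics. A secondary subtlety is that Lemma \ref{lmOrth} describes all extremals as $D_{a,b}$ only when $\dim V\ge 4$, so the small-rank orthogonal cases ($\mathsf{B}_2$, $\mathsf{D}_2$, $\mathsf{D}_3$) may need exceptional-isogeny identifications or direct verification before being absorbed into the general scheme.
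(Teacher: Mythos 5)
Your overall route coincides with the paper's: reduce to $p=3$ via Lemma \ref{lmFernandezFront}, realize $L$ inside $\sl(V)_B$ for each classical type, put the nilpotent lift $X$ of $x$ into a Jordan form adapted to $B$, and exhibit explicit extremal elements ($E_{ij}$, $D_a$, $D_{a,b}$) spanning $I_X$ using the identities of Lemmas \ref{ex:symplectic} and \ref{lmOrth}. However, your plan omits the one structural tool that makes this case analysis terminate, namely Theorem \ref{InnerIdealComm} (Benkart): a \emph{proper} inner ideal of $L$ is commutative and all its nonzero elements are ad-nilpotent of index exactly $3$. The paper uses this in two essential ways. First, combined with \cite[Lemma 12.11]{UniNil} it forces the Jordan blocks of $X$ on $V$ to have size at most $3$ (and size $3$ only when $p=3$), which is what makes the list of Jordan types finite and short. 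Second, and more importantly, it converts every Jordan type in which one can find a noncommuting pair inside $\ad_X^2 L$ into a case where $I_x=L$, which is spanned by extremal elements by hypothesis, so nothing further need be proved; this is how the paper disposes of, e.g., type $\mathsf{A}_n$ with $n_3>0$ and $n>2$, type $\mathsf{C}_n$ with a block of size $3$, and the orthogonal configurations with $n_3>1$ or $n_3=1$, $n_2>0$. Your proposal instead to "build $I_x$ iteratively as the closure of $\{x\}$ under $(a,b)\mapsto[a,[b,L]]$ and verify each generator is a combination of extremals" would not terminate usefully in precisely those noncommutative cases, and in the commutative cases you still need the commutativity to know that the closure stabilizes at an explicit finite-dimensional span. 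Without this dichotomy the plan has a genuine gap.

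A second, concrete error: in the $\mathsf{G}_2$ case $L=\mathfrak{psl}_3$ with $p=3$, the regular nilpotent class (Jordan type $[3]$) consists of elements that are themselves \emph{pure extremal} --- this is exactly the new class of extremal elements (nilpotent of index $3$ and rank $2$) whose appearance is responsible for the building acquiring type $\mathsf{G}_2$. For such $x$ one has $I_x=k\cdot x$, which is spanned by an extremal element but certainly not by rank-one nilpotents, since a one-dimensional space spanned by a rank-two map contains no rank-one map. Your claim that both orbits yield $I_x$ spanned by "images of rank-one nilpotents" is therefore false for the regular class, though the repair is immediate once one admits the enlarged extremal set. (Your worry about $\mathsf{B}_2$, $\mathsf{D}_2$, $\mathsf{D}_3$ is moot: $\mathsf{B}_2$ already has $\dim V=5\ge 4$, and $\mathsf{D}_2$, $\mathsf{D}_3$ do not occur as building types in the range considered.)
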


\begin{proof} 
In light of what has been said above, we may and will assume that $I_x$ is a proper inner ideal of $L$ and that $x$ is not extremal. By Theorem \ref{InnerIdealComm}, $x$ is ad-nilpotent of index $3$ (for, index $2$ would imply the existence of a sandwich) and $I_x$ is commutative. 

 $V$ be a finite-dimensional vector space over $k$. We will study the Lie algebras of classical type by means of the Lie algebra  \[\gl(V)_B=\left\{X\in\gl(V)\mid B(Xu,v) =-B(u,Xv)\quad \text{ for all } u,v\in V\right\}\] where $B$ is trivial in case the type of  $L$ is $\mathsf{A}_n$, a non-degenerate alternating form if the type is $\mathsf{C}_n$, and a non-degenerate symmetric form if the type is $\mathsf{B}_n$ or $\mathsf{D}_n$. The corresponding classical Lie algebra $L$ will then be the quotient of $\sl(V)_B$ by its center.

 Each ad-nilpotent element $x$ of $L$ has a nilpotent inverse image $X$  in $\sl(V)_{B}$ and the inverse image of the inner ideal $I_x$ of $L$ generated by $x$ is an inner ideal of $\sl(V)_{B}$ spanned by the center and the inner ideal $I_X$ of $\sl(V)_{B}$. Moreover, nilpotent inverse images of extremal elements of $L$ are extremal elements of  $\sl(V)_{B}$ and each nilpotent element will have a nilpotent inverse image, so it suffices to study nilpotent elements of $\sl(V)_B$. 
 
 Let $X$ be a nilpotent element of $\sl(V)_{B}$ of ad-nilpotency index $3$. Then $X$ has a Jordan decomposition $  n_1\cdot J_1+n_2\cdot J_2+n_3\cdot J_3$ on $V$. Here, $n_i$ is the number Jordan blocks $J_i$ of size $i$ for $i=1,2,3$. This is a consequence of the fact that the actions of $X$ on the exterior and the symmetric tensor of $V$ (one of which occurs in $L$ as an $X$-module) would have Jordan blocks of size greater than $3$ if $X$ would have Jordan blocks of size larger than $3$ on $V$ (see  \cite[Lemma 12.11]{UniNil}). We will analyse $I_X$ for each of the three types of bilinear form $B$ individually.
 
\medskip \noindent\emph{Type $\mathsf{A}_n$}. We take the dimension of $V$ to be $n+1$.  Assume that $n_3\gt0$. Then there is $v\in V$ with $X^2v\ne0$.  The restriction of $X$ to the linear span $J$ of $X^2v,Xv,v$ has matrix $E_{12}+E_{23}$ with respect to the given triple of vectors, where $E_{ij}$ stands for the $3\times 3$ matrix all of whose entries are zero, except for the $i,j$ entry, which is equal to $1$. Now, for the $3\times 3$ matrix $A=(a_{ij})_{1\le i,j\le 3}$ of an element of $\gl(V)$ leaving $J$ invariant on $X^2v,Xv,v$, we have
\[\begin{array}{rcl}\ad_X^2A&=&\small \begin{pmatrix}a_{31}&a_{32}-2\,a_{21}&a_{33}-2\,a_{22}+a_{11}\\ 0&-2\,
 a_{31}&a_{21}-2\,a_{32}\\ 0&0&a_{31}\\ \end{pmatrix},\\\ad_X^3 A&=& \small \begin{pmatrix}
0&-3\,a_{31}&3\,a_{21}-3\,a_{32}\\
 0&0&3\,a_{31}\\ 
 0&0&0
 \end{pmatrix}\end{array}\]
Choosing $a_{31}=1$, we see that the ad-nilpotency index of $X$  is $3$ only if  $p=3$. (This can also be derived from \cite{UniNil}.)
Suppose, therefore, $p=3$. If $n=2$, then $X$ is extremal, contradicting the assumptions. Thus, $n\gt 2$. 

Choose a vector $w$ in $V$ with $Xw=0$. Now $U=\langle J,w\rangle$ is $X$-invariant. Interpreting the matrix $E_{ij}$ for $i,j\in\{1,2,3,4\}$ with $i\ne j$ as the element of $\sl(V)$ given by the matrix with respect to $X^2v,Xv,v,w$, we find that $[\ad_X^2 E_{34},\ad_X^2 E_{41}] = E_{13}$, so the linear subspace $ \ad_X^2 E_{34}$ of $I_X$ is not commutative.
As a consequence, $I_X=\sl(V)$. We conclude that the case $n_3\gt0$ only occurs for $n=2$ and $p=3$ (as discussed in the Introduction),  in which case the automorphism group of $L$ is of type $\mathsf{G}_2$.

Suppose now $n_3=0$. Since $X$ is nonzero, we must have $n_2\gt0$. If $n_2=1$, then $X$ is extremal, so we may assume $n_2\ge2$. Taking vectors $v_1,w_1,\ldots,v_t,w_t\in V$, where $t=n_2$, such that $Xv_1$, $v_1,\ldots, Xv_t$, $v_t$ are linearly independent, we find the matrix of $X$ on the span $J$ of these $2t$ vectors to be $E_{1,2}+E_{3,4}+\cdots +E_{2t-1,2t} $ and straightforward computations with a square matrix $A=(a_{ij})$  of size $2t$ shows that
 $\ad_X^2A$ is a linear combination  of the extremal elements of $\sl(V)$ corresponding to $E_{ij}$ for $i\in \{1,3,\ldots,2t-1\}$ and $j\in \{2,4,\ldots,2t\}$.
The settles the case of type $\mathsf{A}_n$.

\medskip \noindent\emph{Type $\mathsf{C}_n$ $(n\ge2)$}.
Let $V$ be the natural module for a Lie algebra of type $\text{C}_n$, so $\dim(V) = {2n}$ and there is a non-degenerate alternating bilinear form $B$ on $V$ such that $L$ is the quotient by the center of the Lie algebra $\sl(V)_{B}$ of all linear maps $A:V\to V$ of zero trace with $B(Av,w) = -B(v,Aw)$ for all $v,w\in V$.  Now $X$ has a Jordan decomposition $n_1\cdot J_1+n_2\cdot J_2+n_3\cdot J_3$ on $V$, where $n_3=0$ if $p\ne3$. 
 This is a consequence of the fact that the action of $X$ on the symmetric square of $V$ (which is isomorphic to $L$ as an $X$-module) has Jordan blocks of size greater than $3$ if there would be Jordan blocks for $X$ on $V$ of larger sizes, see  \cite[Lemma 12.11]{UniNil}. Let $m$ be the nilpotency index of $X$ on $V$. We assume that $X$ is nonzero, so $m\in\{2,3\}$. 
 
 We claim that there is a vector $v$ in $V$ such that $v,Xv,\ldots,X^{m-1}v$ span a Jordan block of size $m$ with $B(Xv,v)\ne0$. To see this, observe that, by the choice of $m$ there is a vector $v$ such that $v,Xv,\ldots,X^{m-1}v$ span a Jordan block of size $m$. If $B(Xv,v)=0$, then, since $B$ is non-degenerate, there exists $w\in V$ with $B(Xv,w) = 1$. If $B(Xw,w)\ne0$ and $X^{m-1}w \ne0$, then replacement of $v$ by $w$ and $B(v,w) = 0$ yields the claim. Remains the case where  $B(Xw,w) = 0$ or $X^{m-1}w = 0$. Choose a nonzero $\lambda\in k$ such that $X^{m-1}w\ne -\lambda X^{m-1}v$ if $X^{m-1}w\ne0$ and such that $\lambda\ne -2B(Xv,w)/B(Xw,w)$ if  $B(Xw,w) \ne 0$ and $X^{m-1}w = 0$ (this is possible since $k$ has at least three elements). Then $u = v+\lambda w$ satisfies
 $B(Xu,u) = \lambda (2 +\lambda B(Xw,w))\neq0$ and $X^{m-1}u\ne0$, so the claim holds (with $u$ instead of $v$).

\noindent\emph{Case $m=2$}. Here, we have $n_3=0$. The Jordan decomposition of $X$ on $V$ can be chosen to be orthogonal. To see this, we first observe that, by the above claim, there is a vector $v$ in $V$ such that $X$ has a Jordan block of size $2$ on the span  $\langle Xv,v\rangle$ which is non-degenerate (that is, $B(Xv,v)\neq0$).  Considering the restriction of $X$ to the orthoplement of $\langle Xv,v\rangle$ with respect to $B$ and using the claim iteratively, we find that an $X$-invariant subspace on which $X$ has an orthogonal decomposition $n_2\cdot J_2$. The orthogonal complement of this sum in $V$ is annihilated by $X$.

For $ j=1,2,\ldots,n_2$ choose vectors $v_j$ in $V$ generating the distinct $X$-submodules of the orthogonal decomposition on which $X$ has non-degenerate Jordan blocks of size $2$. The subspaces $k\cdot v_j+k\cdot Xv_j$ are mutually orthogonal. Since $k$ is separably closed, we can scale the $v_j$ so that $ B(v_j,Xv_j)=1$. In terms of the elements $D_v$ and $D_{u,v}$  of $\sl(V)_{B}$ for $u,v\in V$ with $u,v\neq0$ and $B(u,v) = 0$ (defined in Lemma \ref{ex:symplectic} by $D_v(x) = B(x,v) v$ and $D_{u,v}(x) = B(x,u) v+ B(x,v) u$), we have $X = \sum_{j=1}^{n_2}D_{Av_j}$, so $X$ is a sum of commuting extremal elements. Moreover, since $[D_u,D_v] = 0$ if $B(u,v)=0$,  the identities of Lemma  \ref{ex:symplectic} give
 \[ [X,[X,A]] =\sum_{i,j=1}^{n_2} \left[D_{Xv_i},[D_{Xv_j}, A]\right] =\sum_{i,j=1}^{n_2} B(Xv_i,AXv_j)D_{Xv_i,Xv_j}  \]
Application of this identity with $A = D_{v_h,v_l}$ leads to $[X,[X,D_{v_h,v_l}]] = 2D_{Xv_h,Xv_l}$. This shows that the linear span $\langle D_{Xv_i,Xv_j}\mid i,j = 1,\ldots,n_2\rangle$ is contained in $I_X$. By the identities of Lemma \ref{ex:symplectic},  we conclude that this linear span is an inner ideal of $\sl(V)_{B}$. For $A = D_{v_l}$, we find
 \[ \begin{array}{rcl}[X,[X,D_{v_l}]] &=&\sum_{i,j} B(Xv_i,D_{v_l}Xv_j)D_{Xv_i,Xv_j} \\&=&\sum_{i,j} B(Xv_j,v_l)B(Xv_i,v_l)D_{Xv_i,Xv_j} \\&=& D_{Xv_l,Xv_l}\\&=&2D_{Xv_l}\end{array} \]
  so $X$, being a linear combination of these extremal elements, belongs to the linear span of the $D_{Xv_i,Xv_j}$. We conclude that $I_X$ coincides with $\langle D_{Xv_i,Xv_j}\mid i,j =  1,\ldots,n_2\rangle$, which is the linear span of all points contained in the singular subspace of the shadow space $(\PExtr,\SExtr)$ generated by $kXv_1,\ldots,kXv_{n_2}$.
  
\noindent\emph{Case $m=3$}. Here $\charac(k) = 3$. By the above claim, there is vector $v$ such that $X^2v,Xv,v$ is a Jordan block of $X$ with $B(v,Xv) = 1$. Since $B(X^2v,v) =-B(Xv,Xv) = 0$ and $B(X^2v,Xv) = -B(X^3v,v) = 0$, the vector $X^2v$ belongs to the radical of $\langle X^2v,Xv,v\rangle$.
 Since $B$ is non-degenerate and $B(v,Xv) = 1$, there is a vector $w$ in $V$ with $B(v,w) = B(Xv,w) = 0$ and $B(X^2v,w) = 1$. Because $B(v,X^2w) = 1$, the span of $X^2w,Xw,w$ is another Jordan block of size $3$ for $X$. It is disjoint from the first Jordan block since, if a nonzero linear combination of $X^2w$, $Xw$, $w$ would belong to $\langle X^2v,Xv,v\rangle$, also $X^2w$ would belong to it (simply take the appropriate power of the linear combination). But if $X^2w =\alpha v+\beta Xv+\gamma X^2v$ for certain $\alpha,\beta,\gamma\in k$, then we find the following contradiction.
 \[\begin{array}{rclclcl}
 1&=& B(v,X^2w)& =&\alpha B( v,v)+\beta B(v, Xv)+\gamma B(v,X^2v)&=&\beta \\
    0&=& B(Xw,X^2w)& =&\alpha B( Xw,v)+\beta B(Xw, Xv)+\gamma B(Xw,X^2v)&=& \beta \\
 \end{array}\]
 Thus, $v$ and $w$ span a $6$-dimensional $X$-submodule, say $U$, of $V$. The matrix of the restriction of $B$ to $U$ with respect to the basis
 $X^2v,Xv,v,X^2w,Xw,w$ is  
 $$M= \begin{pmatrix} 0&0&0&0&0&1\\  0&0&1&0&-1&0\\  0&-1&0&1&0&0\\  0&0&-1&0&0&
 0\\  0&1&0&0&0&s\\  -1&0&0&0&-s&0\end{pmatrix}$$
 where $s= B(Xw,w)$. A straightforward computation shows that the restriction of
 $[[X,[X,D_v]],[X,[X,D_w]]]$  to $U$ has matrix \[\begin{pmatrix}1&0&0&0&s&0 \\ 0&1&0&0&0&s\\ 0&0&1&0&0&0 \\ 0&1
 &0&-1&0&0 \\ 0&0&1&0&-1&0 \\ 0&0&0&0&0&-1\end{pmatrix}\] with respect to the above basis. Therefore, 
 the members $[X,[X,D_v] ]$ and $[X,[X,D_w] ]$  of $I_X$ have a non-central commutator, and so the image of the inner ideal $I_X$  in $L$ is not commutative. By Theorem \ref{InnerIdealComm} this implies that it coincides with $L$.
We conclude that, in all cases of type $\mathsf{C}_n$, the inner ideal of $L$ generated by an ad-nilpotent element of index $3$ is spanned by extremal elements.
 
 \medskip\noindent\emph{Type $\mathsf{B}_n$ $(n\ge2)$ and $\mathsf{D}_n$ $(n\ge4)$}
Let $n\ge2$ and let $V$ be the natural module for a Lie algebra of type $\mathsf{D}_n$ or $\mathsf{B}_n$, so $\dim(V)={2n}$ or ${2n+1}$, and there is a non-degenerate symmetric bilinear form $B$ on $V$ such that $L$ is the quotient by the center of the Lie algebra $\sl(V)_{B}$ of all linear maps $A:V\to V$ of zero trace with $B(Av,w) = -B(v,Aw)$ for all $v,w\in V$. Moreover, $B(Av,v) = -B(v,Av) = -B(Av,v)$, so $B(Av,v) = 0$ because $\charac(k)\ne2$.

\medskip\noindent\emph{Case $n_3=0$}. Suppose first that the nilpotency index of $X$ on $V$ equals $2$.  Then $B(Xv,Xv)=0$ for each vector $v$, because it is equal to $-B(v,X^2v) $ and $X^2=0$. We claim that there is a singular Jordan block of size $2$ for $X$. To see this, let $v$ be a vector such that $\langle v, Xv\rangle$ is a Jordan block of size $2$. Then, by the above observations, $Xv$ lies in the radical of this block. By non-degeneracy of $B$, there is a vector $w$ in $V$ such that $B(Xv,w) = 1$. Since $B(Xw,v) = -B(Xv,w) = -1$, it follows that $\langle w,Xw\rangle$ is a Jordan block as well. 
 If $B(v,v)=0$ or $B(w,w) = 0$, then   $\langle v, Xv\rangle$ or  $\langle w, Xw\rangle$ is singular and the claim holds, so suppose this is not the case. Take $\lambda\in k$, such that  $\lambda^2 B(v,v)+2\lambda B(v,w)+B(w,w)=0$ (it exists since $k$ is assume to be separably closed), and set $u=\lambda v + w$. Then $ B(u,u) = 0$.
Moreover, $Xu=0$ would imply $0 =B( Xu,w)= \lambda B(Xv,w)+B(Xw,w) = \lambda$, which contradicts $B(w,w)\ne0$. Therefore, $\langle u,Xu\rangle$ is a singular Jordan block. This settles the claim that $X$ has a singular Jordan block of size $2$.

We next claim that there is a second Jordan block of size $2$ which together with the first, spans a non-degenerate $4$-dimensional subspace of $V$. To establish this claim, we  start with a vector $v$ such that $\langle Xv,v\rangle$ is a singular Jordan block of size $2$ and take, similarly to the above, a vector $w\in V$ with $B(Xv,w) = 1$. As before, $B(Xw,w) = B(Xw,Xw) = 0$. After adding a suitable scalar multiple of $Xv$ to $v$, we can arrange for $B(v,w) = 0$.  If $B(w,w) \ne0$, we replace $w$ by $w'=\mu X v+w$, where 
$\mu = -\frac12 {B(w,w)}$, to get $B(w',w') = 2\mu +B(w,w)=0$. We retain $B(Xv,w') = \mu B(Xv,Xv)+B(Xv,w)=1$ and $B(v,w') =\mu B(v,Xv)+B(v,w) =0$, so we may assume $B(w,w) = 0$, which implies that $\langle Xw,w\rangle$ is a singular Jordan block for $X$. 

Straightforward computations show that the matrix $M$ of $B$ with respect to the basis $Xv,v,Xw,w$ is
\[ M = \begin{pmatrix} 0&0&0&1\\ 0&0&-1&0\\ 0&-1&0&0\\ 1&0&0&0\end{pmatrix}
\]
This proves the claim that $X$ has two singular Jordan blocks, which together span a $4$-dimensional non-degenerate subspace $U$ of $V$. 

The restrictions of $X$ and $D_{Xv,Xw}$. to $U$ coincide. We proceed by applying the second claim to $U^\perp$. Continuing this way, we find that, for some $t\le n/2$, there are $v_1,w_1,\ldots,v_t,w_t$ in $V$  such that, for each $i$,  the subspace $U_i$ with basis $Xv_i,v_i,Xw_i,w_i$  is non-degenerate, the matrix of the restriction of $B$ to $U_i$ is equal to $M$, and \[ X = \sum_{i=1}^{t} D_{Xv_i,Xw_i}\]
 We study $[X,[X,\sl(V)_{B}]]$, which must be part of $I_X$. Since each quadruple $Xv_i$, $Xw_i$, $Xv_j$, $Xw_j$ spans a singular subspace of $V$, the last identity of Lemma \ref{lmOrth} gives, for $A\in\gl(V)_B$,
\[\begin{array}{rcl} \left[X,\left[X,A\right]\right] &=&\sum_{i,j=1}^{t} \left[D_{Xv_i,Xw_i},\left[D_{Xv_j,Xw_j},A\right]\right]\\
&\in&\langle D_{Xv_i,Xv_j},D_{Xv_i,Xw_j},D_{Xw_i,Xw_j} \mid i,j=1,\ldots,t \rangle\\
\end{array}\]
Furthermore, the identities of Lemma \ref{lmOrth} with $X= D_{e,f}$ to vectors $a,b,c,d$ of $V$ such that $\langle a,b,c,d\rangle$ is singular
gives
\[\begin{array}{rcl}
\left[D_{a,b},\left[D_{c,d},D_{e,f}\right]\right]&=& \left(B(b,f)B(d,e)-B(b,e)B(d,f)\right) D_{c,a}\\
&&-\left(B(a,f)B(d,e)-B(a,e)B(d,f)\right) D_{c,b}\\&&-\left(B(b,f)B(c,e)-B(b,e)B(c,f))\right) D_{d,a}\\&&+\left(B(a,f)B(c,e)-B(a,e)B(c,f)\right) D_{d,b}
\end{array}\]

Repeated application of this formula gives that $\left[X,\left[X,D_{a,b}\right]\right]=2D_{Xa,Xb}$ whenever $a,b\in\left\{v_1,w_1,\ldots,v_t,w_t\right\}$.
For example, for $h\ne l$, we have
\[\begin{array}{rcl} \left[X,\left[X,D_{v_h,w_l}\right]\right] &=&\sum_{i,j=1}^{t} \left[D_{Xv_i,Xw_i},\left[D_{Xv_j,Xw_j},D_{v_h,w_l}\right]\right]\\
&=&\sum_{i,j\in\{h,l\}} \left[D_{Xv_i,Xw_i},\left[D_{Xv_j,Xw_j},D_{v_h,w_l}\right]\right]\\
&=& \left[D_{Xv_h,Xw_h},\left[D_{Xv_h,Xw_h},D_{v_h,w_l}\right]\right]+ \left[D_{Xv_l,Xw_l},\left[D_{Xv_l,Xw_l},D_{v_h,w_l}\right]\right]\\
&&+\left[D_{Xv_h,Xw_h},\left[D_{Xv_l,Xw_l},D_{v_h,w_l}\right]\right]+\left[D_{Xv_l,Xw_l},\left[D_{Xv_h,Xw_h},D_{v_h,w_l}\right]\right]\\
&=&0+0-D_{Xw_l,Xv_h}-D_{Xw_l,Xv_h}\\
&=&2D_{Xv_h,Xw_l}\\
\end{array}\]
This implies that the extremal elements  $D_{Xv_h,Xv_l}$, $D_{Xv_h,Xw_l}$, and $D_{Xw_h,Xw_l}$ all belong to $\left[X,\left[X,\gl(V)_B\right]\right]$.
Since each summand $D_{Xv_l,Xw_l}$ of $X$ lies in $\left[X,\left[X,\gl(V)_B\right]\right]$, so does $X$. This shows that the inner ideal $I_X$ coincides with the span of the extremal elements $D_{Xv_i,Xw_j} ,D_{Xv_i,Xv_j} , D_{Xw_i,Xw_j}$ for $ {i,j=1,\ldots,t}$.

\medskip\noindent
\emph{Case $n_3\gt0$}. Next assume that $X$ has a Jordan block $J = \langle X^2u,Xu,u\rangle$ of size $3$ for some $u$ in $V$. Then $B(Xu,u) = B(X^2u,Xu) =B(X^2u,X^2u) =0$ and $B(X^2u,u)=-B(Xu,Xu)$. Set $r = B(X^2u,u)$ and $s=B(u,u)$.
 If $r=0$, the vector $X^2u$ belongs to the radical of the restriction of $B$ to $J$. Since $B$ is non-degenerate, there is a vector $v\in V$ such that $B(v,X^2u) = 1$. Because $k$ has more than two elements, there is a nonzero scalar $r'$ in $k$ such that $X^2u+r' X^2v\ne0$. Now $u'=u+r' v$ satisfies $X^2u'\ne0$ and 
 $B(u',X^2u')=B(u+r'v,X^2u+r'X^4v) = B(u+r'v,X^2u)=r' B(v,X^2u)
 =r'\ne0$. This implies that, after replacing $u$ by $u'$, we may assume $r\ne0$. Since, $k$ is separably closed, we can rescale $u$, so that $r=1$. Finally, if $s\ne0$, we can replace $u$ by $u' = u-\frac{s}{2}Xu$ so that $B(X^2u',u') = B(A^2u,u-\frac{s}{2} Xu) = 1$ and $B(u',u') = B(u-\frac{s}{2}X^2u,u-\frac{s}{2}X^2u) = s -2\cdot\frac{s}{2}=0$. We conclude that we may assume $s=0$, so the matrix $M$ of the restriction of $B$ to $J$ with respect to the basis $X^2u,Xu,u$  is \[M = \begin{pmatrix}0&0&1\\ 0&-1&0\\ 1&0&0\end{pmatrix} \]
 In particular, the subspace $J$ is non-degenerate.

If $n_3\gt1$ or $n_3=1$ and $n_2\gt0$, then the characteristic of $k$ must be $3$. For otherwise, as indicated in \cite[Lemma 2.11]{UniNil}, the action of $\ad_X$ on $\sl(V)_B$ has a Jordan block of size greater than $3$. We will deal with the case of characteristic $3$ separately at the end of this proof. For now, we continue with the assumption that, even if $k$ has characteristic $3$, we have $n_3=1$ and $n_2 = 0$, so $n_1 =  2n-2$ if the type of $\B(L)$ is $\mathsf{B}_n$ and $n_1=2n-3$ if the type is $\mathsf{D}_n$. 
Since $X$ annihilates the orthogonal complement of $J$, it coincides with $ D_{X^2u,Xu}$. In general, this need not be an extremal element  of $\sl(V)_B$ since $B(Xu,Xu)=1$ (so $Xu$ is not a point of the geometry). In fact, by the second identity of Lemma \ref{lmOrth}, we have
\[\begin{array}{rcl}
[X,[X,A]] &=&\left[D_{X^2u,Xu},[D_{X^2u,Xu},A]\right] \\ &=&-2B(X^2u,AXu)D_{X^2u,Xu} + B(Xu,Xu) D_{AX^2u,X^2u} \\ &=&  2B(AX^2u,Xu)X +  D_{X^2u,AX^2u} 
\end{array} \]
If $\dim(V) = 3$, then $\sl(V)_B$ is the 3-dimensional Lie algebra consisting of all $A\in\gl(V)$ whose matrix with respect to $X^2u,Xu,u$ has the form
\[  \begin{pmatrix} -a_{33}&a_{12}&0\\ a_{32}& 0 & a_{12}\\ 0 & a_{32} & a_{33}\end{pmatrix}\]
In particular, $AX^2u=a_{32} Xu-a_{33} X^2u$, so $D_{X^2u,AX^2u}=a_{32}D_{X^2u,Xu}$ and $[X,[X,A]] $ is a scalar multiple of  $X=D_{X^2u,Xu}$, which shows that $X$ belongs to $\Extr$.

Suppose that $\dim(V)\gt 3$. Then there is a vector $v\in V$ with $B(v,v) = 1$ orthogonal to $\langle X^2u,Xu,u\rangle$, and 
$X = D_{X^2u,Xu}=\frac12\left( D_{X^2u,Xu+v }+ D_{X^2u,Xu-v}\right)$.
Consequently, $I_X $ coincides with the span of all $ D_{X^2u,w }$ for $w\in V$ with $B(X^2u,w) = 0$. Indeed, the above expression shows that $X$ belongs to the span, and, as we saw above, if $B(X^2u,w) = 0$, then  $ D_{u,w} X^2u=B(X^2u, u )w =w $, so
\[\begin{array}{rcl}  \left[X,[X,D_{u,w}]\right]&=& \left[D_{X^2u,Xu },[D_{X^2u,Xu }, D_{u,w}]\right]\\
&=&-2B(X^2u,D_{u,w}Xu)D_{X^2u,Xu}-B(Xu,Xu) D_{X^2u, D_{u,w}X^2u} \\
&=&2B(D_{u,w}X^2u,Xu)D_{X^2u,Xu}+D_{X^2u, w} \\
&=&2B(w,Xu)D_{X^2u,Xu}+D_{X^2u, w} 
\end{array} \]
For $w=Xu$, this gives $-D_{X^2u, Xu}$. We conclude that $X$ and each $D_{X^2u,w}$ with $B(X^2u,w) = 0$ lies in $[X,[X,\sl(V)_B]$. Hence $I_X = \langle D_{X^2u,w }\mid w\in B(X^2u,w)=0\rangle$.
In particular, $X^2u+v$ and $X^2u-v$ belong to $I_X$ and  $I_X$ is the linear span of $S(D_{X^2u,X^2u+v},D_{X^2u,X^2u-v})$, the symplecton of all lines on $k\cdot X^2u$.

\medskip\noindent
\emph{Proof of $n_3=0$ or $n_3=1$ and $n_2=0$ in case $p=3$.}\label{J3char3} 
We have a single non-degenerate Jordan block $J$ for $X$ on $V$ with basis $X^2u,Xu,u$ and want to prove that $X$ annihilates the orthogonal complement $J^{\perp_B}$. 
This is already known for characteristic distinct from $2$ and $3$, but the proof will be valid for all characteristics distinct from $2$. We assume that $X$ does not annihilate $J^{\perp_B}$ and derive a contradiction. Since the nilpotency index of $X$ on $V$ is at most $3$, we must have $n_3\gt 1$ or $n_3=1$ and $n_2\gt0$.

First assume $n_3\gt1$. As before, by the same argument as before, but now applied to $X$ on $J^{\perp_B}$, the latter subspace contains a second non-degenerate Jordan block, say $K$ with basis $X^2v, Xv,v$. Put $U = J+ K$. Now the matrix $M$ of $B$ with respect to the bases of $J$ and $K$ is
\[\begin{pmatrix}  M&0\\ 0&M\end{pmatrix}\]
where $M$ is the $3\times3$ matrix displayed in Case $n_3>0$ of the proof above.
A straightforward computation shows that
\[\left[ \left[X,[X,D_{u,Xv}]\right],\left[X,[X,D_{v,Xv}]\right]\right] = D_{X^2u,X^2v}\]
so $I_X$, which contains both  $ \left[X,[X,D_{u,Xv}]\right]$ and $\left[X,[X,D_{v,Xv}]\right]$, is not commutative, contrary to our assumptions.

The case where $n_3=1$ and $n_2\gt0$ remains. By the analysis of the case where $n_3=0$ applied to $J^{\perp_B}$, there are vectors $v$ and $w$ such that
$Xv,v,Xw,w$ is a non-degenerate $4$-dimensional $X$-invariant subspace of $J^{\perp_B}$ on which $X$ has two Jordan blocks of size $2$ and such that
the matrix $M$ of the restriction of $B$ to $7$-dimensional subspace $U$ of $V$ with basis $X^2u,Xu,u,Xv,v,Xw,w$ is 
\[M = \begin{pmatrix} 0&0&1&0&0&0&0\\ 0&-1&0&0&0&0&0\\ 1&0&0&0&0&0&0\\
0&0&0&0&0&0&1\\ 0&0&0&0&0&-1&0\\ 0&0&0&0&-1&0&0\\ 0&0&0&1&0&0&0\end{pmatrix}\]
A straightforward computation shows that
\[\left[ \left[X,[X,D_{u,Xu}]\right],\left[X,[X,D_{u,w}]\right]\right] =-2 D_{X^2u,Xw}\]
so $I_X$, which contains both  $ \left[X,[X,D_{u,Xu}]\right]$ and $\left[X,[X,D_{u,w}]\right]$, is not commutative, contrary to our assumptions. This ends the poof that, even if $k$ has characteristic $3$, the inner ideal $I_X$  is only proper if either $n_3=0$ or $n_3=1$ and $n_2=0$.

This ends the proof of Proposition \ref{lmIOrd3EltSpByE}.
\end{proof}

In order to derive the same result for Lie algebras of exceptional type $M_n$, that is, one of $\mathsf{E}_n$ ($n=6,7,8)$, $\mathsf{F}_4$, $\mathsf{G}_2$, we use a modest part of the classification of the orbits of ad-nilpotent elements of index $3$ under the algebraic group $G$. Several authors contributed to it. The complete classification can be found in Liebeck-Seitz \cite{UniNil}. 

\begin{proposition} \label{PropOrd3Exceptional}
Let $L$ be a finite-dimensional simple Lie algebra over $k$ spanned by pure extremal elements  and  suppose that the type of $\B(L)$ is exceptional. Assume that $k$ is separably closed. If $x$ is a nilpotent element of $L$, then the inner ideal $I_x$ of $L$ generated by $x$ is spanned by pure extremal elements.
\end{proposition}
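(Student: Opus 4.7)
The plan is to combine the Jordan-theoretic reduction of Lemma \ref{lmFernandezFront} with the Liebeck--Seitz classification of nilpotent orbits in exceptional types. First we may assume $I_x\ne L$, for otherwise $I_x$ is trivially spanned by $\Extr(L)$ by hypothesis. Theorem \ref{InnerIdealComm} then forces $x$ to be ad-nilpotent of index exactly $3$. If $\charac(k)\ne 3$, Lemma \ref{lmFernandezFront} applies verbatim and delivers the conclusion. So the real task is the case $\charac(k)=3$, in which the Capacity existence theorem underlying Lemma \ref{lmFernandezFront} is not available.

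For $\charac(k)=3$ I would invoke the classification of $G(k)$-orbits of ad-nilpotent elements of index $3$ from \cite{UniNil}, type by type through $\mathsf{G}_2$, $\mathsf{F}_4$, $\mathsf{E}_6$, $\mathsf{E}_7$, $\mathsf{E}_8$. Since $G(k)$ acts on $L$ by automorphisms which preserve ad-nilpotency index, and since $I_{gx}=gI_x$, the spanning property need only be checked on one representative per orbit; the list of index $3$ orbits is very short in each of these types once the commutativity constraint of Theorem \ref{InnerIdealComm} (which kills all higher Jordan-type orbits) is imposed.

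For each representative $x$, the first attempt is to exhibit $x$ inside a proper Levi or subsystem subalgebra $\mathfrak{m}\subseteq L$ of classical type whose long root vectors remain extremal in $L$ and whose building embeds compatibly into $\B(L)$. When this succeeds, the inner ideal $I_x^{\mathfrak m}$ of $\mathfrak m$ is spanned by extremal elements of $\mathfrak m$ by Proposition \ref{lmIOrd3EltSpByE}, and these remain in $\Extr(L)$. The additional generators $[x,[x,z]]$ of $I_x$ with $z\in L\setminus\mathfrak m$ can then be analysed using Lemmas \ref{InnerxyColl} and \ref{InnerxyPolar} applied to the decomposition of $x$ as a sum of commuting long root vectors of $L$, exactly as in the derivation at the end of the proof of Lemma \ref{lmFernandezEnd}.

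The main obstacle is the handful of orbits which occur only in characteristic $3$ and whose representatives cannot be pushed into a classical Levi of the above kind (for instance the orbits tied to short-root phenomena in $\mathsf{F}_4$ and $\mathsf{G}_2$, and a few special unipotent classes in $\mathsf{E}_n$). For these orbits I would imitate the computational approach used in Proposition \ref{lmIOrd3EltSpByE}: fix a Chevalley basis of $L$, write $x$ and $\ad_x^2 L$ explicitly in coordinates, and verify in each case that either the bracket $[\ad_x^2 L,\ad_x^2 L]$ is nonzero modulo the centre (so $I_x$ is non-commutative and thus, by Theorem \ref{InnerIdealComm}, equals $L$, contradicting $I_x\ne L$), or else every nonzero element of $\ad_x^2 L$ is visibly a linear combination of long root vectors of $L$. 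The brevity promised by the phrase ``a modest part of the classification'' reflects the fact that, in each exceptional type, only a small number of orbits survive both these filters, keeping the case analysis manageable.
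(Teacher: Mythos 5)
Your overall strategy coincides with the paper's on the essential points: reduce to ad-nilpotent elements of index $3$ via Theorem \ref{InnerIdealComm}, then run through the Liebeck--Seitz classification \cite{UniNil} of the few surviving orbits (those whose Levi type has all components $\mathsf{A}_1$, plus the $\mathsf{A}_2$-component types when $p=3$) and check, representative by representative, whether $\ad_x^2L$ is commutative --- concluding $I_x=L$ when it is not, and exhibiting a spanning set of extremal elements when it is. Your shortcut of disposing of $\charac(k)\ne3$ by Lemma \ref{lmFernandezFront} is legitimate and is explicitly sanctioned by the paper, which nevertheless chooses to treat all characteristics distinct from $2$ uniformly by the same tables. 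The intermediate step you propose --- embedding $x$ into a classical Levi subalgebra and importing Proposition \ref{lmIOrd3EltSpByE} --- is not how the paper proceeds; it works directly with root-vector representatives $x=\sum_i X_{\alpha_i}$ in the exceptional algebra. Your Levi route would need extra care: the inner ideal of $L$ generated by $x$ is in general larger than the one generated inside the Levi, and controlling the extra generators $[x,[x,z]]$ for $z$ outside the Levi via Lemmas \ref{InnerxyColl} and \ref{InnerxyPolar} only bounds $I_x$ from above by a span of extremal elements; it does not by itself show that $I_x$ \emph{equals} the span of the extremal elements it contains.

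Two points in your final computational filter need sharpening. First, ``every nonzero element of $\ad_x^2L$ is visibly a linear combination of long root vectors'' is not the right criterion: one must show $I_x$ is spanned by extremal elements \emph{contained in} $I_x$, and in the one genuinely delicate surviving case --- $x=X_{\alpha_4}$ of type $\widetilde{\mathsf{A}}_1$ in $\mathsf{F}_4$ --- the subspace $\ad_x^2L$ is spanned by root vectors one of which is the short root vector $x$ itself, not a long root element. The paper handles this by exhibiting the extremal elements $X_{-\alpha_1-2\alpha_2-2\alpha_3}+X_{\alpha_2+2\alpha_3+2\alpha_4}\pm X_{\alpha_4}$ inside the inner ideal, whose difference recovers $x$; some such argument is indispensable. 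Second, to conclude you must also identify $I_x$ with $\ad_x^2L$: the paper verifies $[Y,Y]=0$ and $[Y,[Y,L]]=\ad_x^2L$ for the spanning set $Y$ and checks $x\in\ad_x^2L$, so that $\ad_x^2L$ is an inner ideal containing $x$ and hence equals $I_x$. Merely describing $\ad_x^2L$ leaves open the possibility that $I_x$ is strictly larger. With these two repairs your plan matches the paper's proof.
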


\begin{proof}
Let $x$ be a nilpotent element of $L$. By Theorem \ref{InnerIdealComm}, the inner ideal $I_x$ generated by $x$ coincides with $L$ if $\ad_x$ is nilpotent of index greater than $3$ or $\ad_x^2L$ is not commutative. We will use the classification of nilpotent elements, specifically tables 22.1.1--22.1.5 of \cite{UniNil}. Nilpotent elements of a simple Lie algebra of exceptional type are classified by use of Levi subalgebras in which they are distinguished. If the type of the Levi subalgebra has a component distinct from $\mathsf{A}_1$ and, if  $p=3$, also be distinct from $\mathsf{A}_2$, then the index is greater than $3$, so the representatives of conjugacy classes of ad-nilpotent elements of index at most $3$ are among those from the mentioned tables for which the corresponding Levi subalgebra has a type all of whose components are $\mathsf{A}_1$ (or, if the characteristic of $k$ is three, $\mathsf{A}_2$).

In the table below, representatives of the classes of the nilpotent elements of type $\mathsf{A}_1^r$ which are not long root elements (excluded because they are pure extremal) are listed  along with a representative in $L$ and the dimensions of $[x,L]$ and $\ad_x^2L = [x,[x,L]]$. Here, $\alpha_1$ is a fundamental long root in case $\mathsf{F}_4$ and a fundamental short root in case $\mathsf{G}_2$ (consistent with the notation of \cite{Bourb}). The type $\mathsf{A}_1$ refers to a long root element, and the adorned symbol  $\widetilde{\mathsf{A}}_1$ to a short root element.

If  $\ad_x^2L $ is not commutative, this is indicated in the last column by "nc". Otherwise, $\ad_x^2L$ is found to coincide with $I_x$ and spanned by extremal elements; in this case the last column indicates the type of shadow space generated by the extremal points contained in it. This suffices for the proof of the proposition in case the characteristic is distinct from $2$ and $3$. Below, we first give some more comments on an individual case and next deal with the case where $k$ has characteristic $3$.

 \[
\begin{array}{lcrrlc}
 \text{type}&\text{representative }x&\dim(\ad_xL)&\dim(\ad_x^2L)&I_x&\\
\hline\hline
\multicolumn6{c}{\text{Lie algebra of type }\mathsf{E}_6}\\
\hline
\mathsf{A}_1^2&X_{\alpha_2}+X_{\alpha_3}&32&8&\text{symp}&\\
\mathsf{A}_1^3&X_{\alpha_2}+X_{\alpha_3}+X_{\alpha_5}&40&13&\text{nc}&\\
\hline\hline
\multicolumn6{c}{\text{Lie algebra of type }\mathsf{E}_7}\\
\hline
\mathsf{A}_1^2&X_{\alpha_1}+X_{\alpha_2}&52&10&\text{symp}&\\
\left(\mathsf{A}_1^3\right)^{(1)}&X_{\alpha_2}+X_{\alpha_5}+X_{\alpha_7}&54&27&\mathsf{E}_{6,1}&\\
\left(\mathsf{A}_1^3\right)^{(2)}&X_{\alpha_2}+X_{\alpha_3}+X_{\alpha_5}&64&19&\text{nc}&\\
\mathsf{A}_1^4&X_{\alpha_2}+X_{\alpha_3}+X_{\alpha_5}+X_{\alpha_7}&70&28&\text{nc}&\\
\hline\hline
\multicolumn6{c}{\text{Lie algebra of type }\mathsf{E}_8}\\
\hline
\mathsf{A}_1^2&X_{\alpha_1}+X_{\alpha_2}&92&14&\text{symp}&\\
\mathsf{A}_1^3&X_{\alpha_2}+X_{\alpha_3}+X_{\alpha_5}&112&31&\text{nc}&\\
\mathsf{A}_1^4&X_{\alpha_2}+X_{\alpha_3}+X_{\alpha_5}+X_{\alpha_7}&128&44&\text{nc}&\\
\hline\hline
\multicolumn6{c}{\text{Lie algebra of type }\mathsf{F}_4}\\
\hline
\widetilde{\mathsf{A}}_1&X_{\alpha_4}&22&7&\text{symp}&\\
\mathsf{A}_1\widetilde{\mathsf{A}}_1&X_{\alpha_1}+X_{\alpha_4}&28&10&\text{nc}&\\
\hline\hline
\multicolumn6{c}{\text{Lie algebra of type }\mathsf{G}_2}\\
\hline
{\widetilde{\mathsf{A}}}_1&X_{\alpha_1}&8&5&\text{nc}\\
\end{array}
\]
 
\medskip\noindent
By way of example, consider $x = X_{\alpha_4}$, a nilpotent element of type $\widetilde{\mathsf{A}}_1$ in $L$ of type $\mathsf{F}_4$.
The image  $\ad_x^2L$ is the $7$-dimensional subspace spanned by  the set $Y$ of all $X_\gamma$ for $\gamma$ running over \[\begin{array}{rcl}  &&
-\alpha_1-2\alpha_2-2\alpha_3 ,
-\alpha_1-\alpha_2-2\alpha_3 ,
-\alpha_2-2\alpha_3 ,
 \alpha_4  ,\\
&& \alpha_2+2\alpha_3 +2\alpha_4  ,
   \alpha_1 + \alpha_2+2\alpha_3 +2\alpha_4 ,
       \alpha_1 + 2\alpha_2+2\alpha_3 +2\alpha_4
     \end{array} \]
Computation shows that $[Y,Y] = 0$ and
 $ [Y,[Y,L]] =\ad_x^2L$. This implies that  $\ad_x^2L$ is an inner ideal. Since $x\in \ad_x^2L$, it coincides with $I_x$. All but its fourth element, which is $x$, are long root elements.
The elements  $X_{-\alpha_1-2\alpha_2-2\alpha_3}+ X_{\alpha_2+2\alpha_3 +2\alpha_4 }\pm X_{\alpha_4}$ are extremal and lie in $Y$. Their difference is $\pm2X_{\alpha_4}$. This establishes that the inner ideal $I_x$  is spanned by extremal elements. In fact, it is the span of the shadow on $\PExtr$ of an object of $\mathsf{B}(L)$ of type $4$.

For the remainder of the proof, we assume $\charac(k) = 3$. If the Levi subalgebra in which $x$ is distinguished has a type with a component distinct from $\mathsf{A}_1$ and  $\mathsf{A}_2$, then the ad-nilpotency index is greater than $3$, so the types of classes for $x$ that need to be considered are  $\mathsf{A}_2^r \mathsf{A}_1^s$  (up to tildes) with $r\ge0$ and $s\ge0$. Those with $r=0$ have been treated above. 
The table below lists those with $r\ge1$ with those  that have already been ruled out for a subalgebra appearing earlier in the table already omitted. 
The data in the table pertains to characteristic $3$, so the dimensions may differ from those for other characteristics. 
For instance,  in the last two cases of the Lie algebra of type $\mathsf{E}_6$, the dimension of $\ad_xL$  for  $p\ne2,3$ are $48$ and $54$, respectively, in accordance with the sizes of the centralizer groups listed in \cite[Table 22.2.3]{UniNil}, whereas they are $47$ and $51$, respectively, for $p=3$. Similarly, the dimension of $\ad_x^2L$ for the single case in the Lie algebra of type $\mathsf{E}_8$ is $164$ for  $p=3$; for $p>3$, it is $168$.

In all cases, the subspace $\ad_x^2L$ turns out to be noncommutative, so $I_x = L$. Since this noncommutativity persists for elements of Lie algebras embedded in larger Lie algebras, the representatives of elements occurring in smaller Lie algebras of the list are omitted. 
For instance, for the Lie algebra of type $\mathsf{E}_7$, there are $6$ cases, five of which occur in type $\mathsf{E}_6$, so we only need consider the type $\mathsf{A}_2\mathsf{A}_1^3$.
In characteristic $3$, the Lie algebra of type $\mathsf{G}_2$ is not simple (the quotient of type $\mathsf{A}_2$ has been dealt with in Proposition \ref{lmIOrd3EltSpByE}), so it does not appear in the list.
\[
\begin{array}{lcrrr}
\multicolumn5{c}{\text{characteristic }3}\\
\hline\hline
 \text{type}&\text{representative }x&\dim(\ad_xL)&\dim(\ad_x^2L)&\\
\hline\hline
\multicolumn5{c}{\text{Lie algebra of type }\mathsf{E}_6}\\
\hline
\mathsf{A}_2&X_{\alpha_1}+X_{\alpha_3}&42&21&\\
\mathsf{A}_2\mathsf{A}_1&X_{\alpha_1}+X_{\alpha_2}+X_{\alpha_3}&46&22&\\
\mathsf{A}_2\mathsf{A}_1^2&X_{\alpha_1}+X_{\alpha_2}+X_{\alpha_3}+X_{\alpha_5}&50&23&\\
\mathsf{A}_2^2&X_{\alpha_1}+X_{\alpha_2}+X_{\alpha_5}+X_{\alpha_6}&47&23&\\
\mathsf{A}_2^2\mathsf{A}_1&X_{\alpha_1}+X_{\alpha_2}+X_{\alpha_3}+X_{\alpha_5}+X_{\alpha_6}&51&24&\\
\hline\hline
\multicolumn5{c}{\text{Lie algebra of type }\mathsf{E}_7}\\
\hline
\mathsf{A}_2\mathsf{A}_1^3&X_{\alpha_1}+X_{\alpha_2}+X_{\alpha_3}+X_{\alpha_5}+X_{\alpha_7}&84&42&\\
\hline\hline
\multicolumn5{c}{\text{Lie algebra of type }\mathsf{E}_8}\\
\hline
\mathsf{A}_2^2\mathsf{A}_1^2&X_{\alpha_1}+X_{\alpha_2}+X_{\alpha_3}+X_{\alpha_5}+X_{\alpha_7}+X_{\alpha_8}&164&80&\\
\hline\hline
\multicolumn5{c}{\text{Lie algebra of type }\mathsf{F}_4}\\
\hline
\mathsf{A}_2&X_{\alpha_1}+X_{\alpha_2}&30&22&\\
\widetilde{\mathsf{A}}_2&X_{\alpha_3}+X_{\alpha_4}&30&22&\\
\mathsf{A}_2\widetilde{\mathsf{A}}_1&X_{\alpha_1}+X_{\alpha_2}+X_{\alpha_4}&34&19&\\
\widetilde{\mathsf{A}}_2{\mathsf{A}}_1&X_{\alpha_1}+X_{\alpha_3}+X_{\alpha_4}&36&23&\\
\end{array}
\]
The conclusion is that no other ad-nilpotent elements $x$ exists in $L$ for which $I_x$ is a proper inner ideal than those found for all characteristics distinct from $2$. This ends the proof of Proposition \ref{PropOrd3Exceptional}.
\end{proof}

The proof that inner ideals are spanned by extremal elements is complete. It enables us to study inner ideals by means of the geometry on pure extremal points. For the recognition of the geometries we will use two results.
The first is an updated version of a result in \cite{CohCoop}. Recall that $x^\perp$, for $x$ a point of a point-line space, denotes the set of all points collinear with $x$.
\begin{theorem} [Theorem 15.4.5 \cite{Shult}] \label{CCShult1545}Let $T$ be a strong parapolar space all of whose symplecta are polar spaces of the same rank $r\ge 3$ and all of whose singular subspaces have finite rank. Suppose, moreover, that, for every symplecton $S$ and every point $x$ of $T$ outside $S$, the singular subspace $x^\perp\cap S$ has singular rank distinct from $r-2$. Then we have one of the following six possibilities.
\begin{itemize}
\item $T$ is a polar space of rank $r$.
\item $r=3$ and $T$ is the Grassmannian $\mathsf{A}_{h,t}(\mathbb{D})$ of $(t+1)$-dimensional subspaces of a vector space $\mathbb{D}^{h+1}$, where $h$ and $t$ are natural numbers with $1\lt t\le \frac{h}{2}$ and $\mathbb{D}$ is a division ring. 
\item $r=3$ and $T$ is the quotient $\mathsf{A}_{2t-1,t}(\mathbb{D})/\langle\sigma\rangle$  of  $\mathsf{A}_{2t-1,t}(\mathbb{D})$ by a polarity $\sigma$ of $\mathbb{D}^{2t}$ of Witt index at most $t-5$, where $t$ is at least $5$.
\item $r=4$ and $T$  is a homomorphic image of the half-spin geometry $\mathsf{D}_{n,n}(\mathbb{F})$ by a polarity, where $n$ is a natural number with $n\ge5$ and $\mathbb{F}$ is a field. 
This homomorphism is an isomorphism if $n\le9$.
\item $r=5$ and $T$  is isomorphic to $\mathsf{E}_{6,1}(\mathbb{F})$, where $\mathbb{F}$ is a field.
\item  $r=6$ and $T$  is isomorphic to $\mathsf{E}_{7,7}(\mathbb{F})$, where $\mathbb{F}$ is a field.
\end{itemize}
\end{theorem}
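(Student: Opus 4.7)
\noindent\emph{Proof proposal.} The strategy is to identify $T$ from its local structure via residues and then invoke Tits's classification of spherical buildings, handling the two non-building quotients separately. The argument proceeds in three phases.

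First, I would introduce a local invariant: for any point $x$ and any symplecton $S$ with $x\notin S$ and $x^\perp\cap S\ne\emptyset$, let $m(x,S)$ be the singular rank of $x^\perp\cap S$. By hypothesis, $m(x,S)\in\{0,1,\ldots,r-3,r-1\}$. Using the gamma property and the strong parapolar assumption, together with the finite singular-rank hypothesis, I would argue by chains of symplecta sharing large singular intersections that $m(x,S)$ depends only on $T$; call this constant $m$. The constancy statement is the first nontrivial step: given two pairs $(x,S)$ and $(x',S')$, one connects them by a path of pairs along which the rank can only change by one at a time, and the excluded value $r-2$ is the only way to pass between $r-3$ and $r-1$, so the rank cannot cross through it.

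Second, I would do a case analysis according to the value of $m$. When $m=r-1$, every exterior point meets each symplecton in a maximal singular subspace, and a direct application of the Buekenhout--Shult--Veldkamp theorem identifies $T$ itself as a polar space of rank $r$. When $m\le r-3$, I would pass to the residue $\mathrm{Res}_T(x)$ at a point $x$, whose objects are the lines and singular planes on $x$. This residue is itself a strong parapolar space, with symplectic rank $r-1$ inherited from the symplecta on $x$, and it satisfies the excluded-rank hypothesis of the theorem, so an induction on $r$ applies. The low-rank base cases ($r=3$ with $m=0$ yielding Grassmannians, and $m=1$ yielding half-spin and $\mathsf{E}_6$) are handled by a direct rank-two analysis using Moufang polygon recognition.

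Third, the residual information assembles into a connected Coxeter--Dynkin diagram. Given $r$ and $m$, the only compatible diagrams are $\mathsf{A}_h$, $\mathsf{D}_n$, $\mathsf{E}_6$, and $\mathsf{E}_7$ with shadow space of the type indicated in the statement. Tits's classification of irreducible spherical buildings of rank at least three then identifies the universal cover of $T$ as a shadow space of such a building, and one finally recognizes the admissible quotients: polarities of Witt index at most $t-5$ on $\mathbb{D}^{2t}$ in the $\mathsf{A}_{2t-1,t}$ case and polarities on half-spin $\mathsf{D}_{n,n}$ in the $\mathsf{D}$ case. Exactly these quotients preserve the strong parapolar structure without creating new collinearities or collapsing symplecta.

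The main obstacle will be the constancy of $m$ in the first phase: a priori different pairs $(x,S)$ could realize different ranks, and the hypothesis only forbids the single value $r-2$, so ruling out variation between $r-1$ and the lower values requires a careful use of the finite singular-rank assumption to exhibit connecting chains. The secondary difficulty is the polarity-quotient phase, where one must verify precisely which polarities produce the listed geometries; this is exactly where the bound $n\le 9$ for the half-spin case and the Witt-index bound $t-5$ in the Grassmannian case arise, since beyond these bounds the quotient would identify collinear pairs and destroy the partial linearity required of a parapolar space.
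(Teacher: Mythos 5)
This statement is not proved in the paper at all: it is imported verbatim as Theorem 15.4.5 of Shult's book (an updated form of the Cohen--Cooperstein characterization), so there is no internal proof to compare yours against. Judged on its own terms, your proposal has a fatal flaw at its foundation. Your first phase asserts that the singular rank $m(x,S)$ of $x^\perp\cap S$ is a global constant of $T$, and your entire case division (``$m=r-1$ gives a polar space, $m\le r-3$ feeds the residue induction'') rests on this. But constancy is false in every one of the non-polar target geometries: in the Grassmannian $\mathsf{A}_{h,t}(\mathbb{D})$ the set $x^\perp\cap S$ can be empty, a single point, or a maximal singular subspace of $S$ depending on the pair; in the half-spin geometry $\mathsf{D}_{n,n}$ and in $\mathsf{E}_{6,1}$ it is sometimes a point (rank $0$) and sometimes a maximal singular subspace (rank $r-1$). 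So both ``extreme'' values occur simultaneously in the very examples you are trying to recognize, the connecting-chain argument cannot force a single value, and the dichotomy on which phases two and three depend collapses. The hypothesis of the theorem only excludes the single value $r-2$; the correct proof must work with the full set of values that actually occur, pair by pair.

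There are secondary problems as well. The point residue of a Grassmannian is a product of two projective spaces, not a strong parapolar space of symplectic rank $r-1$ satisfying the same hypotheses, so the induction you describe does not close at the bottom; the genuine argument instead invokes characterizations of Grassmann spaces and half-spin spaces by their local structure. Your appeal to ``Moufang polygon recognition'' for the base case is also unavailable here, since the Grassmannian conclusion is stated over an arbitrary division ring $\mathbb{D}$ and no Moufang condition is among the hypotheses. Finally, the quotient analysis (the Witt-index bound $t-5$ and the isomorphism claim for $n\le 9$) is asserted rather than derived; these bounds come from estimating the minimal displacement of a polarity acting on the shadow space, which you would need to carry out explicitly. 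If you want to understand how the result is actually established, the place to look is Cohen--Cooperstein and Chapter 15 of Shult, not a reconstruction from the statement.
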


The statement of the theorem in \cite{Shult}  includes the hypothesis that $T$ be locally connected, but here we do not mention it since it is implied by the requirement that $T$ be a strong parapolar space. It will be used for Lie algebras of type $\mathsf{E}_n$ $(n=6,7,8)$.

The other theorem we will use is for type  $\mathsf{F}_4$. It is also rephrased in current terminology.
\begin{theorem}[Theorem 2.3 \cite{Cohen8Meta}] \label{thMeta}
Let $P$ be a parapolar space of symplectic rank $3$ satisfying the following two properties.
 \begin{enumerate}[(i)]
\item The set $x^\perp\cap S$ is never a point if $S$ is a symplecton of $P$ and $x$ is a point a outside $S$.
\item There are no minimal $5$-circuits in the following sense: if $x_1,\ldots,x_5$ are distinct points of $P$ such that $x_i$ is collinear to $x_{i+1}$ and not to $x_{i+2}$ for each $i$ (indices taken modulo $5$), then there is an index $j\in\{1,\ldots,5\}$ such that $\{x_j, x_{j+2},x_{j+3}\}$ is contained in a symplecton. 
\end{enumerate}
Then $P$ is either a polar space of rank $3$ or a root shadow space of type $\mathsf{F}_{4,1}$.
\end{theorem}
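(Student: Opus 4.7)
If $P$ has only one symplecton, then, since every line of $P$ lies in a symplecton, $P$ coincides with that symplecton, so $P$ is already a polar space of rank~$3$. Assume therefore that $P$ has at least two symplecta. Because $P$ is a gamma space, for every point $x$ and symplecton $S$ with $x\notin S$, the intersection $x^{\perp}\cap S$ is a singular subspace of $S$. Since $S$ is a polar space of singular rank~$2$, that intersection is empty, a point, a line, or a plane; condition~(i) rules out the point case. The plan is to use condition~(ii) to also rule out the line case, so that $x^{\perp}\cap S$ is always either empty or a full plane, and then to invoke a standard recognition theorem for parapolar spaces of symplectic rank~$3$.

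The central technical step is the exclusion of the line case. Assuming $x^{\perp}\cap S = L$ is a line, I would take a plane $\pi$ of $S$ meeting $L$ in the single point $p$, choose $z\in\pi\setminus L$, pick $q\in L\setminus\{p\}$ that fails to be collinear with $z$ (generic choice inside $S$), and select an auxiliary point $y\in x^{\perp}\setminus S$ supplied by the existence of a second symplecton on some line through~$x$. Then, by locating a common neighbor $u$ of $z$ and $y$ lying outside $x^{\perp}\cup p^{\perp}$, I would verify that $(x,y,u,z,p)$ is a minimal $5$-circuit, directly contradicting~(ii). The main obstacle is producing $u$ with all the required non-adjacencies, and checking that no triple $\{x_j,x_{j+2},x_{j+3}\}$ lies in a common symplecton; each such hypothetical containment would propagate back to force either $u\in S$ or $x^{\perp}\cap S$ strictly larger than~$L$, contradicting the hypotheses in place.

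Once the line case is excluded, $x^{\perp}\cap S$ is always either empty or a full plane. This places $P$ in the scope of standard recognition theorems. If $P$ is a strong parapolar space, the $r=3$ specialisation of Theorem~\ref{CCShult1545} applies and leaves only the polar-rank-$3$ conclusion, the Grassmannian and Grassmannian-quotient cases being eliminated because they admit minimal $5$-circuits that violate~(ii). If $P$ is not strong, so that some pair of points at distance~$2$ has a unique common neighbor, then the combination of plane-or-empty intersections, thickness of lines and symplecta, and the absence of minimal $5$-circuits is exactly the axiomatic pattern characterising $\mathsf{F}_{4,1}$: a Tits--Buekenhout residue-patching argument assembles the residues at points (each a rank-$3$ polar space) together with the symplecta into a thick building of type~$\mathsf{F}_4$, whose shadow on the first node recovers~$P\cong\mathsf{F}_{4,1}(\mathbb{F})$ for some field~$\mathbb{F}$. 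I expect the hard part of the proof to be the $5$-circuit construction ruling out line intersections; the remaining steps are largely applications of known recognition machinery.
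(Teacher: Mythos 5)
You should first note that the paper does not prove this statement at all: it is imported verbatim (modulo rephrasing) from \cite{Cohen8Meta}, so there is no in-paper argument to compare yours against, and your attempt has to stand on its own as a proof of a genuine characterization theorem. As such it is an outline whose two load-bearing steps are asserted rather than proved. The decisive gap is the non-strong branch. Writing that the situation ``is exactly the axiomatic pattern characterising $\mathsf{F}_{4,1}$'' and appealing to an unexecuted ``Tits--Buekenhout residue-patching argument'' is a restatement of the conclusion, not a derivation of it: essentially the entire content of Cohen's Theorem 2.3 is concentrated there. To carry it out one must construct planes and symplecta as a geometry belonging to the $\mathsf{F}_4$ diagram, prove that the residue of a point is an honest $\mathsf{C}_3$ \emph{building} (ruling out Neumaier-type flat $\mathsf{C}_3$ geometries is exactly the kind of thing condition (ii) is for), verify residual connectedness and the intersection properties, and only then invoke Tits' local-to-global theorem. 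None of this appears, and nothing in your sketch indicates how (ii) would be consumed in that verification.

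The other main step---excluding the intermediate (line) intersection case via a minimal $5$-circuit---is also only a plan. You need a point $y\in x^\perp\setminus S$ with $y\not\sim p$ and $y\not\sim z$, and then a common neighbour $u$ of $y$ and $z$ avoiding $x^\perp\cup p^\perp$; but $y$ and $z$ might a priori be at distance $3$, or form a special pair whose \emph{unique} common neighbour lies in $x^\perp\cup p^\perp$, and you must in addition check all five conditions ``$\{x_j,x_{j+2},x_{j+3}\}$ lies in no symplecton.'' Each of these is a real argument, and they are precisely where such characterizations are won or lost. Two further cautions: the elimination of the Grassmannian outputs of Theorem \ref{CCShult1545} by exhibiting $5$-circuits is plausible but unverified for all $\mathsf{A}_{h,t}$ and their polarity quotients; and you should test your target configuration against the model itself. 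In the actual metasymplectic space the non-incident point--symplecton positions are ``$x^\perp\cap S$ is a maximal singular subspace'' and ``$x^\perp\cap S$ is at most a point''---the hypothesis that powers the known recognitions (Cohen--Cooperstein, and the rank-$\ne r-2$ clause of Theorem \ref{CCShult1545}) is that $x^\perp\cap S$ is never a \emph{line}. Your plan uses (ii) to do that job while reading (i) as excluding the point case; before building the proof on that division of labour you should verify that $\mathsf{F}_{4,1}$ itself satisfies both hypotheses as you interpret them, since otherwise the argument can only ever output a polar space.
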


We are now ready for the second step of the proof of Theorem \ref{th-InnerIsShadow}.

\begin{proof}
Let $I$ be a nontrivial proper inner ideal of $L$ and write $\PExtr_I =\{k\cdot x\mid x\in  E\cap I\}$ for the set of extremal points of $\PExtr$ contained in $I$.
By Propositions \ref{lmIOrd3EltSpByE} and \ref{PropOrd3Exceptional}, $I$ is spanned by $\PExtr_I $.

First suppose that $\FExtr=\emptyset$. If $\SExtr=\emptyset$, then there are no commuting pairs of distinct extremal points and so $I$ must be spanned by a single extremal point, which is the shadow of a flag of $\B(L)$, so the theorem obviously holds. If $\SExtr\ne\emptyset$, then, since $k$ is separably closed, Theorem \ref{RSSofL} gives that $\left(\PExtr,\SExtr\right)$ is a non-degenerate polar space and the Lie algebra has type $\mathsf{C}_n$. All shadows of the building on extremal points are singular subspaces of $\left(\PExtr,\SExtr\right)$. On the other hand, since strongly commuting pairs do not occur in $I$, the subspace $\PExtr_I$ of $(\PExtr,\SExtr)$  is singular, and so, by \cite[Proposition 11.5.14]{buek1}, it is the shadow of an object of $\B(L)$. Thus, we have dealt with the case where $\FExtr=\emptyset$.

For the remainder of the proof we assume $\FExtr\ne\emptyset$.  By Theorem \ref{RSSofL}, $\left(\PExtr,\FExtr\right)$ is the root shadow space of the spherical building $\B(L)$. If each pair of mutually distinct points of $\PExtr_I$ is strongly commuting, then  $\PExtr_I$ coincides with $\mathbb{P}(I)$ and is a projective subspace of  $(\PExtr,\FExtr)$,  so, again by \cite[Proposition 11.5.14]{buek1}, it is the shadow of a flag of $\B(L)$ (the proposition concerns only shadow spaces on a single node, but in the case of $\mathsf{A}_{n,\{1,n\}}$ the proof is also valid after a slight adaption).
Therefore, since $I$ is commutative (cf.~Theorem \ref{InnerIdealComm}), we may assume that there are extremal elements $x$, $y$ in $I$ such that $(k\cdot x,k\cdot y)$  is a polar pair. By Lemma \ref{InnerxyPolar}, the inner ideal  $I_{\{x,y\}} =\langle S(x,y)\rangle$ is contained in $I$. If $I_{\{x,y\}}$ and $I$ coincide, then $\PExtr_I=S(x,y)$ is a symplecton, which is known to be the shadow of a flag of $\B(L)$.
Therefore, we can restrict our attention to the case where, for each polar pair $(x,y)$ of extremal elements, $\PExtr_I$ strictly contains the symplecton $S( x,  y)$. This implies that $\PExtr_I$ is a subspace of $(\PExtr,\FExtr)$.
It is non-degenerate, since, if $z$ is a point of $\PExtr_I$ then $S(x,y)$ either contains $z$, in which case it contains a point at distance $2$ to $z$, or $x^\perp\cap S(x,y)$ is a singular subspace of $S(x,y)$ and so there must be points of $S(x,y)$ not collinear with $z$. This readily implies that $\PExtr_I$ is a convex closed subspace of $(\PExtr,\FExtr)$ that is strongly parapolar of diameter $2$.

Moreover, if $x$ is point of $\PExtr_I$ and $S$ a symplecton contained in $\PExtr_I$ but not containing $x$, then the set $x^\perp\cap S$ of points of $S$ collinear with $x$ is a singular subspace of $\PExtr_I$ of rank distinct from $0$ and, if $r\ge 4$, also from $r-2$, where $r$ is the rank of the symplecton. This is a fact concerning the root shadow space $(\PExtr,\FExtr)$ and, because $\PExtr_I$ is convex closed, it must also hold for $\PExtr_I$. In the cases where $L$ is of exceptional type, the symplectic rank $r$ is constant and one of $3,4,5,6$. In these cases, the isomorphism type of $\PExtr_I$ is as described in the conclusion of one of these two theorems.

We will be dealing with the different types of Lie algebra individually.

\medskip\noindent\emph{Type $\mathsf{A}_n$ $(n\ge 1)$}. The case $n=1$ has been dealt with above, so we take $n\ge2$. We will use the same setting as before and view $L$ as the quotient by the center of the Lie algebra $\sl(V)$, where $V$ is a vector space of dimension $n+1$ over $k$.  For the time being, we will set aside the case $(n,p)=(2,3)$. Then 
each extremal element of $\sl(V)$ is a nilpotent linear map $X : V\to V$ of index $2$ and rank $1$. Thus, its image and kernel are an incident pair of a point $p_X$ and a hyperplane $h_X$ of $\ProjSp(V)$. The extremal point $k\cdot X$ of $\PExtr(\sl(V))$, as well as its image in $\PExtr(L)$, corresponds bijectively to the incident pair $(p_X,h_X)$. In these terms, collinearity between two extremal elements $X$ and $Y$ means that $p_X=p_Y$ or $h_X= h_Y$, and commuting means that $p_X,p_Y\in h_X\cap h_Y$.
Thus, if  $X$ and $Y$ are non-collinear members of $\PExtr_I$, then $p_X\ne p_Y$ and $h_X\ne h_Y$, and the symplecton
 generated by them consists of all $Z$ with $p_Z\in \langle p_X,p_Y\rangle$ and $h_Z\supseteq h_X\cap h_Y$. Continuing with more members of $\PExtr_I$, we find that there are linear subspaces $U$ and $W$ of $V$ with $U\subseteq W$ such that $\PExtr_I$ consists of all pairs $(p,H)$ with $p\in U$ and $H\supseteq W$. This means that, with $s$ and $t$  the dimension of $U$ and $W$, respectively, the  subspace $\PExtr_I$ is the shadow of a flag of type $\{s,t\}$  of the building $\B(L)$.  
 
To finish the discussion of type $\mathsf{A}_n$, we discuss the expectional case where $(n,p) = (2,3)$. For fields (not necessarily $p=3$), the extremal points coming from nilpotent maps of index $2$ and rank $1$ correspond to the chambers of the building of type $\mathsf{A}_2$. If $p=3$, another class of extremal elements emerges. It consists of the nilpotent maps  $V\to V$ of index $3$ and rank $2$. Since all nilpotent elements of $\sl(V)$ of ad-nilpotency index $3$ are extremal, the projective points of each proper inner ideal are extremal, and so $\PExtr_I$ must be singular. In other words, no symplecta occur, that is $\SExtr=\emptyset$. This implies that $(\PExtr,\FExtr)$ is a generalized hexagon, so $\B(L)$ is of type $\mathsf{G}_2$ and the theorem follows from the above observations. This is in accordance with the fact that the automorphism group of $L$ is a split algebraic group of type $\mathsf{G}_2$.

\medskip\noindent\emph{Types $\mathsf{B}_n$ $(n\ge 2)$ and $\mathsf{D}_n$ $(n\ge 4)$}. Here, we use $V$ and $B$ as in Lemma \ref{lmOrth}, in order to represent $L$ as the quotient by the center of $\sl(V)_B$. The set $\PExtr(L)$ corresponds to the set of lines of the polar space $P$ of isotropic points of $\ProjSp(V)$ (that is, those $k\cdot v$  for $v\in V$ with $B(v,v) = 0$). Explicitly, if $X\in \Extr(\sl(V)_B)$, then there are isotropic vectors $a$, $b$ in $V$ with $B(a,b) = 0$  such that $X= D_{a,b}$ as described in Lemma \ref{lmOrth}, and the extremal point $k\cdot X$ corresponds to the isotropic line $\ell_X := \langle a , b\rangle $ of $P$.  The polar space $P$ is non-degenerate and has rank $n$. We recall that two extremal points $k\cdot X$ and $k\cdot Y$ in $\Extr$ are
\begin{itemize}
\item collinear if and only if $\ell_X$ and $\ell_Y$ span a singular plane of $P$ and have a point in common,
\item polar if and only if they are not collinear and either meet, in which case $S( X,Y) $ is the set of lines on a point of $P$ (a symplecton of rank $n-1$) or span a singular subspace of $P$ of rank $3$, in which case $S( X, Y) $ corresponds to the set of all lines in the singular subspace (a symplecton of rank $3$),
\item special if and only if the lines do not meet, are not contained in a singular subspace of $P$, and there is a unique line of $P$ in $\ell_X^\perp\cap\ell_Y^\perp$.
\end{itemize}
Inspection of the Dynkin diagrams of type $\mathsf{B}_n$  and $\mathsf{D}_n$ shows that the minimal flags of the building $\text{B}(L)$ whose shadows on $\PExtr$ are not singular subspaces have the following types.
\begin{itemize}
\item $\{1\}$. In terms of $P$, the shadows are the symplecta of lines of $P$ containing a point of $P$.
\item $\{j\}$ for $j\in\{3,4,\ldots,n\}$. In terms of $P$, the shadows are the Grassmannians of all lines in a singular subspace of $P$.
\end{itemize}

We continue with the assumption that $\PExtr_I$ strictly contains a symplecton $S( X,  Y)$, where $(X,Y)$ is a polar pair in $\Extr(\sl(V))$.
Assume $k\cdot Z\in\PExtr_I \setminus S( X,  Y)$. If $\ell_X$ and $\ell_Y$ meet, then $S(X, Y)$ corresponds to the set of all lines on a point $y$ of $P$, and $\ell_Z$ is a line not contained in $y^\perp$. 
Take a point $q$ on $\ell_Z\setminus y^\perp$. The subspace $q^\perp\cap y^\perp$ is a non-degenerate polar space, so there is a point $r\in q^\perp\cap y^\perp\setminus \ell_z^\perp$. This point is collinear with $y$ and distinct from it, so $yr $ is a line of $P$. Let $U\in\Extr(\sl(V))$ be such that $\ell_U=yr$. This line does not lie in a singular subspace with $\ell_Z$ and does not meet  $\ell_Z$, so  $(U,Z)$ is not commutative, a contradiction. 

We conclude that the span of  $\ell_X$ and $\ell_Y$ in $P$ must be a singular subspace for every polar pair $(k\cdot X,k\cdot Y)$ in $\PExtr_I$. In particular, there is a singular subspace $R$ of $P$ such that $k\cdot W\in \PExtr_I$ whenever $\ell_W\in R$. Choose $R$ maximal with respect to this property. The assumption on $Z$ implies that $\ell_Z$ does not lie in $R$. Since $\ell_Z\subset R^\perp$  contradicts the maximality of $R$, the line $\ell_Z$ contains a point $q$ that is not collinear with all of $R$. Take $r\in R\setminus q^\perp$, so $r\not\in \ell_Z$. Since the singular rank of $R$ is at least $2$ (due to the presence of a symplecton), there is a line $m$ in $R$ on $r$ with $\ell_Z\cap R$ (which is empty or a singleton) not contained in $m$. Now $m=\ell_U$ for some $U\in \Extr(\sl(V))$ and, since $\ell_U$ and $\ell_Z$ do not meet and do not span a singular subspace of $P$,  the pair $(U,Z)$ of elements from $I$ does not commute. This contradiction with $I$ being commutative shows that  $ \PExtr_I$ cannot be strictly larger than a symplecton. This ends the proof of the theorem for the orthogonal cases.

Since type $\mathsf{C}_n$ has already been dealt with, we continue with the exceptional types.

\medskip\noindent\emph{Type $\mathsf{E}_6$}. The subspace $\PExtr_I$ satisfies the conditions of Theorem \ref{CCShult1545} with $r=4$. This gives
that is  $\PExtr_I$  is a homomorphic image of a half-spin geometry of type $\mathsf{D}_{t,t}(\mathbb{F})$ by a polarity, where $t$ is a natural number with $t\ge5$ and $\mathbb{F}$ is a field. Maximal singular subspaces of $(\PExtr,\FExtr)$ have rank $4$ and those of the homomorphic images mentioned in the theorem have rank $t-1$, so we must have $t=5$. Since the singular subspaces are projective spaces defined over $k$, we have $\mathbb{F} = k$. Therefore, $\PExtr_I\isom \mathsf{D}_{5,5}(k)$. Since $S(x,y)$ is properly contained in the convex closed space $\PExtr_I$ of diameter $2$, there is an extremal point $x\in\PExtr_I\setminus S$ such $x^\perp\cap S$ is a maximal singular subspace of $S$. The convex closure of $S\cup\{x\}$ is contained in the shadow $Q$ of an object of type $1$ or $6$, which must be contained in $\PExtr_I$.  By the same theorem as for $\PExtr_I$, the space $Q$ is also isomorphic to $\mathsf{D}_{5,5}(k)$. The maximal singular subspaces of $Q$ and $\PExtr$ must coincide, and so $Q=\PExtr_I$. This establishes that $\PExtr_I$ coincides with the shadow of a flag of type  $\{1\}$ or $\{6\}$ of $\B(L)$, and so $I$ is the span of this shadow. Thus, the theorem holds for $L$ of  type $\mathsf{E}_6$.

\medskip\noindent\emph{Type $\mathsf{E}_7$}. The subspace $\PExtr_I$ satisfies the conditions of Theorem \ref{CCShult1545} with $r=5$, so $\PExtr_I$  is isomorphic to $\mathsf{E}_{6,1}(\mathbb{F})$, where $\mathbb{F}$ is a field. Since the lines of $\PExtr_I$ are lines of $\ProjSp(L)$, we must have $\mathbb{F} = k$.  On the other hand, as for the previous case, the fact that $\PExtr_I$ is convex closed and strictly contains $S$ implies the existence of an element $x\in \PExtr_I\setminus S$ with $x^\perp\cap S$ a subspace of maximal singular rank in $S$. Again the convex closure of $S\cup\{x\}$ is a shadow space $Q$ isomorphic to  $\mathsf{E}_{6,1}(k)$ contained in $\PExtr_I$, and we conclude again that $\PExtr=Q$, the shadow of a flag of type $\{7\}$. This settles the theorem for type $\mathsf{E}_7$.

\medskip\noindent\emph{Type $\mathsf{E}_8$}. Application of Theorem \ref{CCShult1545}  with $r=7$ gives that  $\PExtr_I$ must be a polar space, a contradiction. So there are no proper inner ideals other than those spanned by shadows of flags of the building.

\medskip\noindent\emph{Type $\mathsf{F}_4$}. Application of Theorem \ref{thMeta}  with $r=3$ enables us to conclude that $\PExtr_I$ must coincide with $\PExtr$.

\medskip\noindent\emph{Type $\mathsf{G}_2$}.  Since $p=3$ has already been dealt with under $\mathsf{A}_2$, we assume $p>3$. As a consequence of the study of elements occurring in $I$ in Proposition \ref{lmIOrd3EltSpByE}, the only proper inner ideal $I_x$ for an ad-nilpotent element $x$ occurs for $x$ a long root element of $L$. In particular, each inner ideal consists of $0$ and extremal elements only, and so corresponds to a singular subspace, of dimension 1 or 2, of $L$. These are the spans of the shadows of the objects of type $2$ and $1$, respectively, of $\B(L)$.
  
This ends the proof of Theorem \ref{th-InnerIsShadow}.
\end{proof}

\section{Conclusion}\label{S-conclusion}

We finish the proof of the main theorem  by combining Proposition \ref{ShadowsInnerProposition} and Theorem \ref{th-InnerIsShadow}, 

\begin{proof}
Let $\Lie(G)$ be the Lie algebra of a simple algebraic group $G$ defined over a field $k$ of characteristic distinct from $2$. Let $L_G$ be the unique nontrivial simple quotient of $[\Lie(G),\Lie(G)]$. If $\charac(k)=3$, assume that $G$ is not of absolute type $\mathsf{A}_2$.   Let $k_s$ be the separable closure of $k$ with Galois group $\Gamma$.

Now suppose that $I$ is a proper nontrivial inner ideal of $L_G$ and write $L = L_G$. Then $I\otimes k_s$ is an inner ideal of $L\otimes k_s$. Since $k_s$ is a splitting field for $G$ over $k$, the Lie algebra $L\otimes k_s$ is spanned by its pure extremal elements. By Theorem \ref{th-InnerIsShadow}, the set $T=\PExtr(L\otimes k_s)_{I\otimes k_s}$ is the shadow of a flag of the building $\B(L\otimes k_s)$ such that $I\otimes k_s =\langle T\rangle$. Now $\Gamma$ leaves $I\otimes k_s$ and $T$ invariant, and $I = (I\otimes k_s)^\Gamma = \langle
T\rangle^\Gamma$ is the set of $\Gamma$-fixed points of the span of the shadow $T$ of a flag of $\B(L\otimes k_s)$ fixed by $\Gamma$. 

Conversely, let $R$ be a $\Gamma$-invariant shadow  on $\PExtr(L\otimes k_s)$ of a flag of $\B(L\otimes k_s)$. The subspace $\langle R\rangle$ of $L\otimes k_s$ is $\Gamma$-invariant, and so $I = \langle R\rangle \cap L$ is a subspace of $L$ of the same dimension. By Proposition \ref{ShadowsInnerProposition}, the subspace $\langle R\rangle$ is a proper inner ideal, so $[I,[I,L]]\subseteq L\cap( I\otimes k_s) = I$, which shows that $I$ is a proper inner ideal of $L$. The map sending $I$ to $T =\PExtr(L\otimes k_s)_{I\otimes k_s}$ and the map sending the $\Gamma$-invariant shadow $R$ of a $\Gamma$-fixed flag of $\B(L\otimes k_s)$ on $\PExtr(L\otimes k_s)$ to $ \langle R\rangle \cap L$ are each other's inverses, so these establish the required bijective correspondence.
\end{proof}

\bibliographystyle{plain}
\def\cprime{$'$} \def\Dbar{\leavevmode\lower.6gex\hbox to 0pt{\hskip-.23ex
  \accent16\hss}D}

\end{document}